\newcommand*{\mailto}[1]{\href{mailto:#1}{\nolinkurl{#1}}}
\newcommand{\arxiv}[1]{\href{http://arxiv.org/abs/#1}{arXiv:#1}}
\newcommand{\msc}[1]{\href{http://www.ams.org/msc/msc2010.html?t=&s=#1}{#1}}
\newcommand{\ack}{\section*{Acknowledgments}}
\newtheorem{theorem}{Theorem}[section]
\newtheorem{corollary}[theorem]{Corollary}
\newtheorem{proposition}[theorem]{Proposition}
\newtheorem{lemma}[theorem]{Lemma}
\newtheorem{hypothesis}{Hypothesis}[section]
\theoremstyle{definition}
\newtheorem{definition}[theorem]{Definition}
\newtheorem{example}[theorem]{Example}
\newtheorem{remark}[theorem]{Remark}
\newcommand{\be}{\begin{equation}}
\newcommand{\ee}{\end{equation}}
\newcommand{\wh}{\widehat}
\newcommand{\id}{{\mathbbm 1}}
\numberwithin{equation}{section}
\DeclareMathOperator{\diam}{diam} 
\DeclareMathOperator{\Span}{span} 
\DeclareMathOperator{\dom}{dom}
\DeclareMathOperator{\sym}{sym}
\def\mG{\mathsf{G}} 
\newcommand\R{{\mathbb{R}}}
\newcommand\C{{\mathbb{C}}}
\newcommand\Z{{\mathbb{Z}}}
\newcommand{\gt}{\mathfrak{t}}
\newcommand{\gC}{\mathfrak{C}}
\newcommand\cI{{\mathcal{I}}}
\newcommand\cT{{\mathcal{T}}}
\newcommand\cL{{\mathcal{L}}}
\newcommand\cN{{\mathcal{N}}}
\newcommand\cU{{\mathcal{U}}}
\newcommand\cG{{\mathcal{G}}}
\newcommand\cE{{\mathcal{E}}}
\newcommand\cP{{\mathcal{P}}}
\newcommand\cV{{\mathcal{V}}}
\newcommand\cR{{\mathcal{R}}}
\newcommand{\varrhoo}{\varrho}
\newcommand\bH{{\mathbf{H}}}
\newcommand\rH{{\rm{H}}}
\newcommand\E{{\rm{e}}}
\newcommand\vol{{\rm{vol}}}
\newcommand\Nr{{\rm{n}}}
\newcommand\I{{\rm{i}}}
\newcommand\rD{{\rm{d}}}
\def\wt#1{{{\widetilde #1} }}
\begin{document}

\title[Gaffney Laplacian on Metric Graphs]{A Note on The Gaffney Laplacian\\ on Infinite Metric Graphs}

\dedicatory{To Mark Malamud, our teacher, colleague and friend, on the occasion of his 70th birthday}

\author[A. Kostenko]{Aleksey Kostenko}
\address{Faculty of Mathematics and Physics\\ University of Ljubljana\\ Jadranska ul.\ 21\\ 1000 Ljubljana\\ Slovenia\\ and 
Institute for Analysis and Scientific Computing\\ Vienna University of Technology\\ Wiedner Hauptstra\ss e 8-10/101\\1040 Vienna\\ Austria}
\email{\mailto{Aleksey.Kostenko@fmf.uni-lj.si}}

\author[N. Nicolussi]{Noema Nicolussi}
\address{Faculty of Mathematics\\ University of Vienna\\
Oskar-Morgenstern-Platz 1\\ 1090 Vienna\\ Austria}
\email{\mailto{noema.nicolussi@univie.ac.at}}

\thanks{{\it Research supported by the Austrian Science Fund (FWF) 
under Grants No.\ P 28807 (A.K. and N.N.) and W 1245 (N.N.), and by the Slovenian Research Agency (ARRS) under Grant No.\ J1-1690 (A.K.)}}

\keywords{Quantum graph, graph end, self-adjoint extension, Markovian extension}
\subjclass[2010]{Primary \msc{34B45}; Secondary \msc{47B25}; \msc{81Q10}}

\begin{abstract}
We show that the deficiency indices of the minimal Gaffney Laplacian on an infinite locally finite metric graph are equal to the number of finite volume graph ends. Moreover, we provide criteria, formulated in terms of finite volume graph ends, for the Gaffney Laplacian to be closed. 
\end{abstract}

\maketitle

\section{Introduction}

The standard way to associate an operator with the Laplacian on an unbounded domain in $\R^n$ or 
on a non-compact Riemannian manifold is to consider it either on smooth compactly supported functions (the so-called {\em pre-minimal operator}) or on the largest possible natural domain when Laplacian is understood in a distributional sense in $L^2$ 
 (the {\em maximal operator}). Then the pre-minimal Laplacian is essentially self-adjoint if and only if its $L^2$ closure,  the {\em minimal operator}, coincides with the maximal operator. For geodesically complete manifolds the essential self-adjointness was proved by W.\ Roelcke \cite{roe} (see also \cite{che,str}). It is impossible to give even a brief account on the subject and we only refer to  \cite{bms, ebe, shu}, some recent work in the case of non-complete manifolds \cite{ma99, ma05} and also for weighted graph Laplacians \cite{hklw, hkmw}. If we further restrict the maximal operator to functions having finite energy, the self-adjointness of this operator (called the {\em Gaffney Laplacian} \cite{gaf, gm}) is equivalent to the uniqueness of a Markovian extension.  Clearly, essential self-adjointness implies the uniqueness of Markovian extensions, but the converse is not necessarily true. We refer to \cite[Chapter I]{ebe} for an excellent account on importance and applications of both the self-adjoint and Markovian uniqueness.

The main object of our paper is a {\em quantum graph}, i.e., a Laplacian on a metric graph.
From the perspective of Dirichlet forms, quantum graphs play an important role as an intermediate setting between Laplacians on Riemannian manifolds and discrete Laplacians on weighted graphs. The most studied quantum graph operator is the \emph{Kirchhoff Laplacian}, which provides the analog of the Laplace--Beltrami operator in the setting of metric graphs. Whereas on finite metric graphs the Kirchhoff Laplacian is always self-adjoint, the question is more subtle for \emph{graphs with infinitely many edges}.  Geodesic completeness (w.r.t. the natural path metric) guarantees self-adjointness of the (minimal) Kirchhoff Laplacian, however, this result is far from being optimal \cite[\S 4]{ekmn}. 
The present paper is a complement to the recent work \cite{kmn19}, where a relationship between one of the classical notions of boundaries for infinite graphs, \emph{graph ends}, and self-adjoint extensions of the minimal Kirchhoff Laplacian on a metric graph was established. More precisely, the notion of \emph{finite volume} for ends of a metric graph introduced in \cite{kmn19} turns out to be the proper notion of a boundary for Markovian extensions of the Kirchhoff Laplacian. Our main goal is to elaborate on the relationship between finite volume graph ends and the Gaffney Laplacian, which is defined as the restriction of the maximal Kirchhoff Laplacian to functions having finite energy (i.e., $H^1$ functions). First of all, one of the main results of \cite{kmn19} provides a transparent geometric characterization of the self-adjointness of the Gaffney Laplacian: {\em the underlying metric graph has no finite volume ends} (see Lemma \ref{cor:Markov} below). Our first main result shows that the deficiency indices of the minimal Gaffney Laplacian (i.e., the degree of the unitary group parameterizing its self-adjoint extensions) are in fact equal to the number of finite volume graph ends (Theorem \ref{th:n-pm=C0}). The Gaffney Laplacian has several advantages comparing to the maximal Kirchhoff Laplacian, although one of the main disadvantages is the fact that it is not necessarily closed. Our second main result,  Theorem \ref{th:main} provides necessary and sufficient conditions for the Gaffney Laplacian to be closed (see also Prop.~\ref{prop:graphseq}). These conditions are stated in terms of finite volume graph ends and in certain cases of interest (graphs of finite total volume or Cayley graphs of countable finitely generated groups) they give rise to a transparent geometrical criterion: {\em the Gaffney Laplacian is closed if and only if the underlying metric graph has finitely many finite volume ends}, which is further equivalent to the fact that the deficiency indices of the minimal Gaffney Laplacian are finite. 
If the Gaffney Laplacian is not closed, then the most important question is how to describe its $L^2$ closure. It does not seem realistic to us to obtain a complete answer to this question and we demonstrate by examples that under certain symmetry conditions the closure of the Gaffney Laplacian may coincide with the maximal Kirchhoff Laplacian.  

Let us now briefly describe the structure of the article. Section \ref{sec:prelim} is of preliminary character where we collect basic notions and facts about graphs and metric graphs (Section \ref{ss:II.01}); graph ends (Section \ref{ss:II.02}); Sobolev spaces on metric graphs (Section \ref{ss:II.03}); Kirchhoff, Dirichlet and Neumann Laplacians on metric graphs (Section \ref{ss:II.04}). 
The main results of the present paper are collected in Section \ref{sec:Gaffney}. We introduce the minimal Gaffney Laplacian and the Gaffney Laplacian, study their properties and also investigate their relationship with the Kirchhoff, Dirichlet and Neumann Laplacians. 
In the final section we discuss several explicit examples. 
\subsection*{Notation}
$\Z$, $\R$, $\C$ have their usual meaning; $\Z_{\ge a} := \Z\cap [a,\infty)$.\\
$z^\ast$ denotes the complex conjugate of $z\in\C$.\\  
For a given set $S$, $\#S$ denotes its cardinality if $S$ is finite; otherwise we set $\#S=\infty$.\\
If it is not explicitly stated otherwise, we shall denote by $(x_n)$ a sequence $(x_n)_{n=0}^\infty$.

\section{Quantum graphs and graph ends} \label{sec:prelim}

\subsection{Combinatorial and metric graphs}\label{ss:II.01}
In what follows, $\cG_d = (\cV, \cE)$ will be an unoriented graph with countably infinite sets of vertices $\cV$ and edges $\cE$. For two vertices $u$, $v\in \mathcal{V}$ we shall write $u\sim v$ if there is an edge $e_{u,v}\in \mathcal{E}$ connecting $u$ with $v$. 
For every $v\in \mathcal{V}$, we denote  the set of edges incident to the vertex $v$ by $\cE_v$ and 
\be\label{eq:combdeg}
\deg(v):= \#\{e|\, e\in\cE_v\} 
\ee
is called \emph{the degree} of a vertex $v\in\cV$. 
\emph{A path} $\cP$ of length $n\in\Z_{\ge 0}\cup\{\infty\}$ is a sequence of vertices $(v_0,v_1,\dots, v_n)$ such that $v_{k-1}\sim v_k$ for all $k\in \{1,\dots,n\}$. 

The following assumption is imposed throughout the paper.

\begin{hypothesis}\label{hyp:locfin}
$\cG_d$ is \emph{simple} (no loops or multiple edges)\footnote{This assumption is not a restriction and we only need it to streamline the exposition.}, \emph{locally finite} ($\deg(v) < \infty$ for every $v \in \cV$) and \emph{connected} (for any $u,v\in\cV$  there is a path connecting $u$ and $v$).
\end{hypothesis}

Assigning to each edge $e \in \cE$ a finite length $|e|\in (0,\infty)$ turns $\cG_d$ into a \emph{metric graph} $\cG:=(\cV,\cE,|\cdot|) = (\cG_d,|\cdot|)$. The latter equips $\cG$ with a (natural) topology and metric. More specifically (see, e.g., \cite[Chapter 1.1]{hae}),  a metric graph $\cG$ is a Hausdorff topological space with countable base such that each point $x\in \cG$ has a neighbourhood $\cE_x(r)$ homeomorphic to a star-shaped set $\cE(\deg(x),r_x)$ of degree $\deg(x)\ge 1$,
\[
\cE(\deg(x),r_x) := \{z= r\E^{2\pi \I k/\deg(x)}|\, r\in [0,r_x),\ k=1,\dots,\deg(x)\}\subset \C.
\]
Identifying every edge $e\in\cE$ with a copy of an interval of length $|e|$ and also identifying the ends of the edges that correspond to the same vertex $v$, $\cG$ can be equipped with \emph{the natural path metric} $\varrhoo$ --- the distance between two points $x,y\in\cG$ is defined as the length of the ``shortest" path connecting $x$ and $y$. 

\subsection{Graph ends}\label{ss:II.02}  

 A sequence of distinct vertices $(v_n)$ such that $v_{n}\sim v_{n+1}$ for all $n\in\Z_{\ge 0}$ is called a \emph{ray}.  
Two rays $\cR_1,\cR_2$ are called \emph{equivalent} 
 if there is a third ray containing infinitely many vertices of both $\cR_1$ and $\cR_2$. 
 An equivalence class of rays is called a \emph{graph end of  $\cG_d$}.
 
Considering a metric graph $\cG$ as a topological space, one can introduce topological ends. 
Consider sequences $\cU = (U_n)$ of non-empty open connected subsets of $\cG$ with compact boundaries and such that $U_{n+1} \subseteq U_{n}$ for all $n\ge 0$ and $\bigcap_{n\ge 0} \overline{U_n} = \emptyset$. Two such sequences $\cU$ and $\cU'$ are called \emph{equivalent} if for all $n\ge0$ there exist $j$ and $k$ such that $U_n \supseteq U_j'$ and $U_n' \supseteq U_k$. 
An equivalence class $\gamma$ of sequences is called a \emph{topological end} of $\cG$ and $\gC(\cG)$ denotes the set of topological ends of $\cG$. There is a natural bijection between topological ends of a locally finite metric graph $\cG$ and graph ends of the underlying combinatorial graph $\cG_d$: for every topological end $\gamma\in\gC(\cG)$ of $\cG$ there exists a unique graph end $\omega_{\gamma}$ of $\cG_d$ such that for every sequence $\cU$ representing $\gamma$, each $U_n$ contains a ray from $\omega_{\gamma}$  (see \cite[\S 21]{woe}, \cite[\S 8.6 and also p.277--278]{die} for further details). 

One of the main features of graph ends is that they provide a rather refined way of compactifying graphs, called the \emph{end (or Freudenthal) compactification} of $\cG$ (see \cite[\S 8.6]{die}, \cite{woe} and also \cite[\S 2.2]{kmn19}).

\begin{definition}
An end $\omega$ of a graph $\cG_d$ is called  \emph{free} if there is a finite set $X$ of vertices such that $X$ separates $\omega$ from all other ends of the graph.
\end{definition}

\begin{remark} \label{rem:freetop} 
Notice that an end $\gamma \in \gC(\cG)$ is free exactly when 
 there exists a connected subgraph $\wt \cG$ with compact boundary $\partial \wt \cG$\footnote{For a subgraph $\wt\cG$ of $\cG$ its boundary is  $\partial\wt\cG = \{v\in \cV(\wt\cG)| \deg_{\wt\cG}(v) < \deg(v)\}$ and hence  $\partial \wt \cG$ is compact exactly when $\# \partial \wt \cG < \infty$.} such that 
$U_n \subseteq \wt \cG$ eventually for any sequence $ \cU = (U_n)$ representing $ \gamma$ and $U_n' \cap \wt \cG = \varnothing$ eventually for all sequences $\cU' = (U_n') $ representing an end $\gamma' \neq \gamma$.  
\end{remark}

We also need the following notion introduced in \cite{kmn19}.

\begin{definition} \label{def:finvol}
A topological end $\gamma \in \gC(\cG)$ has \emph{finite volume} if there is a sequence $\cU = (U_n)$  representing $\gamma$ such that $\vol(U_n) < \infty$\footnote{As usual, $\vol(A)$ denotes the Lebesgue measure of a measurable set $A \subseteq \cG$.} for some $n$. Otherwise $\gamma$ has \emph{infinite volume}. 
 The set of all finite volume ends is denoted by $\gC_0(\cG)$. 
\end{definition}

\subsection{Function spaces on metric graphs}\label{ss:II.03}  
Identifying every edge $e\in\cE$ with the copy of $\cI_e = [0,|e|]$ (and hence assigning an orientation on $\cG$), we can introduce Sobolev spaces on edges and on $\cG$.  First of all, the Hilbert space $L^2(\cG)$ of functions $f\colon \cG\to \C$ is defined by  
\[
L^2(\cG) = \bigoplus_{e\in\cE} L^2(e) = \Big\{f=\{f_e\}_{e\in\cE}\big|\, f_e\in L^2(e),\ \sum_{e\in\cE}\|f_e\|^2_{L^2(e)}<\infty\Big\}.
\]
The subspace of compactly supported $L^2(\cG)$ functions will be denoted by
\begin{equation*}
	L^2_c(\cG) = \big\{f \in L^2(\cG)| \; f \neq 0 \text{ only on finitely many edges } e \in \cE\big\}.
\end{equation*}
For edgewise locally absolutely continuous functions on $\cG$ let us denote by $\nabla$ the edgewise first derivative,
\be\label{eq:nabla}
\nabla f:= f'.
\ee
Then for every edge $e\in\cE$, 
\[
H^1(e) = \{f\in AC(e)|\, \nabla f\in L^2(e)\},\quad H^2(e) = \{f\in H^1(e)|\,  \nabla f\in H^1(e)\},
\]
where $AC(e)$ is the space of absolutely continuous functions on $e$.  
Next $H^n(\cG\setminus\cV)$, $n\in \{1,2\}$ is defined as the space of functions $f\colon \cG\to \C$ such that   
\[
H^n(\cG\setminus\cV) = \bigoplus_{e\in\cE} H^n(e) = \Big\{f=\{f_e\}_{e\in\cE}\big|\, f_e\in H^n(e),\ \sum_{e\in\cE}\|f_e\|^2_{H^n(e)}<\infty\Big\}.
\]
It becomes a Hilbert space when equipped with the norm $
\| f \|^2_{H^n} := 
\sum_{e\in\cE}\|f_e \|^2_{H^n(e)}$, where $\|\cdot\|^2_{H^n(e)} =\| \cdot\|_{L^2(e)}^2+ \|\nabla^n \cdot\|_{L^2(e)}^2$, $n\in\{1,2\}$.

The first Sobolev space on $\cG$ is defined by
\[
H^1(\cG) = H^1(\cG\setminus\cV)\cap C(\cG),
\]
which is also a Hilbert space when equipped with the above norm. 
We also define $H^1_0(\cG)$ as the closure in $H^1(\cG)$ of $H^1_c(\cG) = H^1(\cG)\cap L^2_c(\cG)$.

The Sobolev space $H^1 (\cG)$ is continuously embedded in $C_b(\cG) = C(\cG)\cap L^\infty(\cG)$ (see, e.g., \cite[Lemma~3.2]{kmn19}) and, moreover, every function $f\in H^1(\cG)$ admits a unique continuous extension to the end compactification $\wh{\cG}$ of $\cG$ (\cite[Prop.~3.5]{kmn19}): 
 for every $f \in H^1(\cG)$ and a (topological) end $\gamma \in \gC(\cG)$, we  define
\begin{equation} \label{eq:defvalueend}
	f( \gamma)  := \lim_{n\to \infty}f(v_n),
\end{equation}
where $\cR = (v_n) \in \omega_\gamma$ is any ray belonging to the corresponding graph end $\omega_\gamma$. 

It turns out that finite volume graph ends serve as a proper boundary for the Sobolev space $H^1(\cG)$. Namely, considering $ H^1(\cG)$ as a subalgebra of $C_b (\cG)$, it was proved in \cite[\S 3]{kmn19} that its closure is isomorphic to $C_0(\cG \cup \gC_0(\cG))$. In particular, ends having infinite volume lead to trivial values, that is, 
\[
f(\gamma) = 0
\]
for every $f\in H^1(\cG)$ if and only if $\gamma\notin \gC_0(\cG)$. Moreover, by \cite[Theorem~3.10]{kmn19}, 
\begin{equation} \label{eq:h10} 
	H^1_0 (\cG) =\{ f \in H^1(\cG) | \; f(\gamma) = 0 \text{ for all } \gamma \in \gC(\cG) \},
\end{equation}
and hence $H^1(\cG) = H^1_0(\cG)$ exactly when $\cG$ has no finite volume ends, $\gC_0(\cG) = \emptyset$.

\subsection{Kirchhoff, Dirichlet and Neumann Laplacians}\label{ss:II.04}

Let $\cG$ be a metric graph satisfying Hypothesis \ref{hyp:locfin}. In the Hilbert space $L^2(\cG)$, we can introduce the Laplacian $\Delta$ defined on the maximal domain $H^2(\cG\setminus\cV)$ and acting on each edge as the negative second derivative, $\Delta = -\nabla^2$. If $v$ is a vertex of the edge $e \in \cE$, then for every $f\in H^2(e)$ the quantities 
\begin{align}\label{eq:tr_fe}
f_e(v) & := \lim_{x_e\to v} f(x_e), &  \partial_e f(v) & := \lim_{x_e\to v} \frac{f(x_e) - f(v)}{|x_e - v|},
\end{align}
are well defined. The Kirchhoff (also called \emph{standard} or \emph{Kirchhoff--Neumann}) boundary conditions at every vertex $v\in\cV$ are then given by
\be\label{eq:kirchhoff}
\begin{cases} f\ \text{is continuous at}\ v,\\[1mm] 
\sum\limits_{e\in \cE_v}\partial_e f(v) =0. \end{cases}
\ee 
Imposing these boundary conditions on the maximal domain yields the \emph{maximal Kirchhoff Laplacian}
\be\label{eq:H}
\begin{split}
	\bH  =  -\Delta\upharpoonright {\dom(\bH)},\quad 
	 \dom(\bH ) = \{f\in H^2(\cG\setminus\cV)|\, f\ \text{satisfies}\ \eqref{eq:kirchhoff},\ v\in\cV\}.
\end{split}
\ee
Restricting further to compactly supported functions we end up with the pre-minimal operator
\be\label{eq:H00}
	\bH_{0}^0  =  -\Delta\upharpoonright {\dom(\bH_{0}^0)},\qquad 
	 \dom(\bH_{0}^0)  = \dom(\bH ) \cap L^2_c(\cG).
\ee
We call its closure $\bH_0 := \overline{ \bH_{0}^{0}}$ in $L^2(\cG)$ \emph{the minimal Kirchhoff Laplacian}. 
Integrating by parts one obtains
\be \label{eq:integrationbp}
	\langle\bH_0^0 f, f\rangle_{L^2(\cG)} = \int_\cG |\nabla f(x)|^2 \; \rD x =:\gt[f] , \qquad f \in \dom(\bH_0^0),
\ee
and hence both $\bH_0^0$ and $\bH_0$ are non-negative symmetric operators. It is known that
\be\label{eq:H0*=H}
		\bH_0^\ast = \bH.
\ee
The equality $\bH_0 = \bH$ holds if and only if $\bH_0$ is self-adjoint (or, equivalently, $\bH_0^0$ is essentially self-adjoint). 
To the best of our knowledge, the strongest sufficient condition which guaranties self-adjointness is provided by the next result.

\begin{theorem}[\cite{ekmn}]\label{th:ekmn}
Let $\varrho_m$ be the \emph{star path metric} on $\cV$, 
\be\label{def:rho_m}
\varrho_m(u,v) := \inf_{\substack{\cP=(v_0,\dots,v_n)\\ u=v_0,\ v=v_n}}\sum_{v_k\in\cP} m(v_k),
\ee
where $m\colon \cV\to (0,\infty)$ is the \emph{star weight} 
\be\label{def:m}
		m(v) := \sum_{e \in \cE_v}|e| = \vol(\cE_v).
\ee
If $(\cV,\varrho_m)$ is complete as a metric space, then $\bH_0^0$ is essentially self-adjoint.
\end{theorem}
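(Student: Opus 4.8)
The plan is to prove essential self-adjointness of $\bH_0^0$ by the standard criterion: it suffices to show that $\operatorname{ran}(\bH_0^0)^\perp = \ker(\bH_0^{0\,*}) = \ker(\bH + 1)$ is trivial (working with $\bH + 1$ rather than $\bH \pm \I$ to exploit non-negativity), i.e., every $f \in \dom(\bH)$ with $-\Delta f = -f$ on each edge must vanish. By \eqref{eq:H0*=H} such an $f$ lies in $\dom(\bH) = H^2(\cG\setminus\cV)$, satisfies the Kirchhoff conditions \eqref{eq:kirchhoff}, and solves $-f'' = -f$ edgewise, so on each edge $f_e$ is a linear combination of $\cosh$ and $\sinh$; the content is entirely in showing $f \equiv 0$.

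First I would set up a cutoff/Caccioppoli argument. Fix a reference vertex $o \in \cV$ and for $R > 0$ let $B_R = \{v \in \cV : \varrho_m(o,v) < R\}$; completeness of $(\cV, \varrho_m)$ is exactly the statement that each closed metric ball $\overline{B_R}$ is finite (this is the metric-graph analogue of the fact that for the star metric, bounded sets are finite — one checks that an infinite ball would contain a ray of finite $m$-length, contradicting completeness). Choose Lipschitz cutoff functions $\chi_R \colon \cV \to [0,1]$ that are $1$ on $B_R$, supported in $B_{2R}$, and Lipschitz with constant $\sim 1/R$ with respect to $\varrho_m$; extend $\chi_R$ to $\cG$ by making it affine on each edge between its vertex values. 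The key point is that $\chi_R$ is a compactly supported function whose edgewise gradient is controlled: on an edge $e = e_{u,v}$ one has $|\nabla(\chi_R)|_e| \le |\chi_R(u) - \chi_R(v)|/|e| \le \operatorname{Lip}(\chi_R)\cdot \varrho_m(u,v)/|e| \lesssim (m(u)+m(v))/(R|e|)$, and summing $|\nabla \chi_R|^2_{L^2(e)} = |e|\,|\nabla(\chi_R)|_e|^2 \lesssim (m(u)+m(v))^2/(R^2|e|)$ over the finitely many edges in the annulus $B_{2R}\setminus B_R$ gives a bound $\|\nabla \chi_R\|_{L^2}^2 \lesssim 1/R^2 \cdot \sum (m(u)+m(v))^2/|e|$; to make this summable-in-$R$-to-$\infty$-vanishing one uses the more careful estimate $\sum_{e \in \cE_v} 1/|e| \cdot \text{(stuff)}$ — actually the clean route is to bound $\|\nabla \chi_R\|^2_{L^2}$ by $\operatorname{Lip}(\chi_R)^2 \cdot \sum_{v} m(v) \lesssim 1/R^2 \cdot \operatorname{vol}(\text{annulus})$, using that $|\nabla(\chi_R)|_e| \cdot |e| \le \operatorname{Lip}(\chi_R)(m(u)+m(v))$ only when paired against length, so $\int_e |\nabla \chi_R| \le \operatorname{Lip}(\chi_R)(m(u)+m(v))$ — and then Cauchy–Schwarz. (I expect this is where one must be slightly clever; see below.)

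Then the computation: since $\chi_R^2 f \in \dom(\bH_0^0)$ (it is $H^2$ edgewise, compactly supported, and continuous, and at each vertex $\sum_e \partial_e(\chi_R^2 f)(v) = \chi_R(v)^2 \sum_e \partial_e f(v) = 0$ because $\chi_R$ is continuous and $f$ is Kirchhoff, the affine-on-edges $\chi_R$ contributing no jump — one must check the product rule for the normal derivatives is compatible; in fact $\chi_R^2$ is continuous and piecewise-$H^1$ so $\chi_R^2 f \in \dom(\bH)$), integration by parts against $-\Delta f = -f$ gives
\[
\int_\cG \nabla(\chi_R^2 f)\cdot \overline{\nabla f}\,\rD x = \langle \bH(\chi_R^2 f), f\rangle = -\int_\cG \chi_R^2 |f|^2 \,\rD x.
\]
Expanding $\nabla(\chi_R^2 f) = \chi_R^2 \nabla f + 2\chi_R f \nabla \chi_R$ and rearranging yields
\[
\int_\cG \chi_R^2 |\nabla f|^2 \,\rD x + \int_\cG \chi_R^2 |f|^2\,\rD x = -2\int_\cG \chi_R f \,\nabla\chi_R \cdot \overline{\nabla f}\,\rD x \le 2\Big(\int \chi_R^2 |\nabla f|^2\Big)^{1/2}\Big(\int f^2 |\nabla\chi_R|^2\Big)^{1/2},
\]
so by Young's inequality $\tfrac12 \int \chi_R^2|\nabla f|^2 + \int \chi_R^2|f|^2 \le 2\int f^2 |\nabla\chi_R|^2$. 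The right-hand side is dominated by $\|\nabla\chi_R\|_{L^\infty}^2 \int_{\text{annulus}} |f|^2$; since $f \in L^2(\cG)$ the integral over the annulus $B_{2R}\setminus B_R$ tends to $0$ as $R \to \infty$, and $\|\nabla \chi_R\|_{L^\infty} \lesssim 1/R$ on each edge of the annulus (here is where the affine extension and the Lipschitz bound enter, together with the lower bound on edge lengths implicit in finiteness of balls — actually $|\nabla(\chi_R)|_e| = |\chi_R(u)-\chi_R(v)|/|e|$ and $|\chi_R(u)-\chi_R(v)| \le \operatorname{Lip}(\chi_R)\varrho_m(u,v) \le \operatorname{Lip}(\chi_R)(m(u)+m(v))$, while $m(v) \ge |e|$, giving $|\nabla(\chi_R)|_e| \le 2\operatorname{Lip}(\chi_R) \lesssim 1/R$). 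Hence the whole right-hand side $\to 0$, forcing $\int_{B_R}|f|^2 \le \int \chi_R^2 |f|^2 \to 0$, i.e. $f \equiv 0$.

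The main obstacle is the gradient estimate for the cutoff: one needs $\|\nabla\chi_R\|_{L^\infty(\cG)} \to 0$ (uniformly over edges) as $R\to\infty$, and this is where the specific structure of the star metric $\varrho_m$ with $m(v) = \operatorname{vol}(\cE_v) \ge \max_{e\in\cE_v}|e|$ is essential — it guarantees that traversing one edge $e$ costs at least $|e|$ in $\varrho_m$-distance (indeed at least $\tfrac12(m(u)+m(v))$ up to adjacent contributions), so a $\varrho_m$-Lipschitz-$\tfrac1R$ function built on vertices automatically has edgewise slope $\lesssim 1/R$. One should state this as a small lemma (metric balls $\overline{B_R}$ are finite, and suitable cutoffs exist with the stated gradient bound) and then the energy identity above closes the argument. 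A secondary technical point is verifying $\chi_R^2 f \in \dom(\bH_0^0)$ and the validity of integration by parts, which follows from $\chi_R^2 f$ being continuous, compactly supported, $H^2$ on each edge, and Kirchhoff at every vertex.
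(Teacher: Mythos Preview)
First, note that the paper does not give its own proof of this theorem: it is quoted from \cite{ekmn} without argument, so there is nothing in the present paper to compare your attempt against. I will therefore comment only on correctness.

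Your overall strategy is the right kind of idea, but the decisive step---the uniform edgewise bound $|\nabla\chi_R|_e\lesssim 1/R$ for the affine extension of a $\varrho_m$-Lipschitz vertex function---is false, and the inequality you invoke points the wrong way. From
\[
|\nabla\chi_R|_e=\frac{|\chi_R(u)-\chi_R(v)|}{|e|}\le \operatorname{Lip}(\chi_R)\,\frac{m(u)+m(v)}{|e|}
\]
you would need $(m(u)+m(v))/|e|\le 2$, whereas $m(u)\ge|e|$ and $m(v)\ge|e|$ give $(m(u)+m(v))/|e|\ge 2$. The ratio is in fact unbounded in general: on the path graph with edge lengths $\ell_{2k}=1$ and $\ell_{2k+1}=1/(k+1)$ one has $m(v_n)\ge 1$ for every $n$ (so $(\cV,\varrho_m)$ is complete), yet on the short edge $e_{2k+1}$ the ratio $(m(v_{2k+1})+m(v_{2k+2}))/|e_{2k+1}|=2k+4\to\infty$. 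Hence the $L^\infty$ control you use in the final display simply does not hold, and the Caccioppoli inequality does not close. Replacing $\varrho_m$ by the natural path metric $\varrho$ and taking $\chi_R(x)=\psi(\varrho(o,x)/R)$ would give $|\nabla\chi_R|\le C/R$ for free, but then you are only proving the theorem under the strictly stronger hypothesis that $(\cG,\varrho)$ is complete (indeed $\varrho_m\ge 2\varrho$ on $\cV$, so $\varrho$-completeness implies $\varrho_m$-completeness but not conversely). A smaller issue: $\chi_R^2 f$ need not lie in $\dom(\bH_0^0)$ since the affine $\chi_R$ is not Kirchhoff; this is harmless, because the integration by parts only requires $\chi_R^2 f$ to be continuous and compactly supported together with the Kirchhoff condition on $f$---write $\langle \chi_R^2 f,(\bH+1)f\rangle=0$ instead.

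For orientation, the argument in \cite{ekmn} does not run a cutoff on $\cG$ at all. It uses a boundary-triplet reduction of the self-adjointness question for $\bH_0$ to that of an associated weighted \emph{discrete} Laplacian on $(\cV,m)$, and then applies the completeness criterion for graph Laplacians from \cite{hkmw}; the star metric $\varrho_m$ enters precisely as an intrinsic metric for that discrete operator. If you want a direct analytic proof on the metric graph, you will need either a genuinely different cutoff construction or a way of controlling $\int|\nabla\chi_R|^2|f|^2$ that exploits the equation $f''=f$ edgewise; the bare $\varrho_m$-Lipschitz bound is not enough.
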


The degree of non-self-adjointness of $\bH_0$ is determined by its \emph{deficiency indices} $\Nr_\pm(\bH_0) = \dim\cN_{\pm\I}(\bH_0)$, where  
\begin{align}\label{def:n_pm}
\cN_z(\bH_0) := \ker(\bH_0^\ast - z)= \ker(\bH - z),\quad z\in\C,
\end{align}
are called the \emph{deficiency subspaces} of $\bH_0$. Notice that $\Nr_+(\bH_0) = \Nr_-(\bH_0)$ since $\bH_0$ is non-negative. It is self-adjoint exactly when $\Nr_\pm(\bH_0) = 0$. Otherwise, according to the von Neumann formulas \cite[Theorem 13.10]{schm}, the self-adjoint extensions of $\bH_0$ can be parameterized by the unitary group ${\rm U}(\Nr)$ with $\Nr=\Nr_\pm(\bH_0)$. 

There is a standard procedure to construct at least one self-adjoint extension of $\bH_0$, the so-called {\em Friedrichs extension}, let us denote it by $\bH_D$. Namely, $\bH_D$ is defined as the operator associated with the closure in $L^2(\cG)$ of the quadratic form \eqref{eq:integrationbp}. Clearly, the domain of the closure coincides with $H^1_0(\cG)$ and hence $\bH_D$ is given as the restriction of $\bH$ to the domain $\dom(\bH_D):= \dom(\bH)\cap H^1_0(\cG)$ (see, e.g., \cite[Theorem 10.17]{schm}). Taking into account \eqref{eq:h10}, $\bH_D$ is often called the {\em Dirichlet Laplacian} (which also explains the subscript). On the other hand, the form $\gt$ is well defined on $H^1(\cG)$ and, moreover,
\[
\gt_N[f] := \gt[f],\qquad f\in \dom(\gt_N) = H^1(\cG)
\]
is closed (since $H^1(\cG)$ is a Hilbert space). The self-adjoint operator $\bH_N$ associated with this form is usually called the {\em Neumann extension} of $\bH_0$ (not to confuse it with Krein--von Neumann extension!) or {\em Neumann Laplacian}.

\begin{remark}
Following the analogy with the Dirichlet Laplacian, it might be tempting to conjecture that the domain of the Neumann Laplacian is given by $\dom(\bH)\cap H^1(\cG)$. However, the operator defined on this domain has a different name --- the Gaffney Laplacian -- and it is not even symmetric in general. The main focus of the following two sections will be on the study of this operator.
\end{remark}

\section{The Gaffney Laplacian}\label{sec:Gaffney}

Let us fix an orientation on $\cG$. 
In the Hilbert space $L^2(\cG)$, we can associate (at least) two gradient operators with $\nabla$ defined by \eqref{eq:nabla}. Namely, set 
\begin{align}\label{eq:nablaDN}
\nabla_D & := \nabla\upharpoonright\dom(\nabla_D), & \nabla_N & := \nabla\upharpoonright\dom(\nabla_N),
\end{align}
where
\begin{align}\label{eq:dom_nabla}
\dom(\nabla_D) & = H^1_0(\cG), & \dom(\nabla_N) & = H^1(\cG).
\end{align}
The importance of $\nabla_D$ and $\nabla_N$ stems from the following fact.

\begin{lemma}\label{lem:H=nabla}
Let $\bH_D$ and $\bH_N$ be the Friedrichs and Neumann extensions of $\bH_0$, respectively. Then
\begin{align}\label{eq:FNviaNabla}
\bH_D &= \nabla_D^\ast \nabla_D, & \bH_N & = \nabla_N^\ast \nabla_N,
\end{align}
where $\ast$ denotes the adjoint operator\footnote{Here and below the product $AB$ of two unbounded operators $A$, $B$ is understood as their composition: $(AB)(f):= A(Bf)$ for all $f\in \dom(AB):= \{f\in \dom(B)|\, Bf\in \dom(A)\}$.}. 
\end{lemma}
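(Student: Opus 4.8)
The plan is to prove both identities by the standard characterization of the operator associated with a closed nonnegative quadratic form: if $\gt$ is closed and nonnegative with form domain $\dom(\gt)$, then the associated self-adjoint operator $A$ has domain $\dom(A) = \{f \in \dom(\gt) \mid \exists\, g \in L^2(\cG) \text{ with } \gt(f,h) = \langle g, h\rangle \text{ for all } h \in \dom(\gt)\}$ and $Af = g$. Since $\bH_D$ is by definition the operator associated with the closure of \eqref{eq:integrationbp} (form domain $H^1_0(\cG) = \dom(\nabla_D)$) and $\bH_N$ is the operator associated with $\gt_N$ (form domain $H^1(\cG) = \dom(\nabla_N)$), it suffices to show that for a densely defined closed operator $T$ (namely $\nabla_D$ or $\nabla_N$), the operator $T^\ast T$ is exactly the operator associated with the closed form $f \mapsto \|Tf\|^2$ with form domain $\dom(T)$. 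This is essentially von Neumann's theorem on $T^\ast T$ (see, e.g., \cite[Theorem 13.13]{schm}), but to apply it I first need to confirm the hypotheses: that $\nabla_D$ and $\nabla_N$ are closed and densely defined operators from $L^2(\cG)$ to $L^2(\cG)$.

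First I would check closedness. For $\nabla_D$: its domain $H^1_0(\cG)$ is by definition a closed subspace of the Hilbert space $H^1(\cG)$, and the graph norm of $\nabla_D$ is equivalent to the $H^1$ norm restricted to that subspace, so $\nabla_D$ is closed. For $\nabla_N$: $\dom(\nabla_N) = H^1(\cG)$ is itself a Hilbert space with norm equal to the graph norm of $\nabla_N$, so $\nabla_N$ is closed as well. Density of both domains in $L^2(\cG)$ is immediate since $H^1_c(\cG) \subseteq H^1_0(\cG) \subseteq H^1(\cG)$ and $H^1_c(\cG)$ (which contains, e.g., all edgewise-$C^\infty$ functions supported on finitely many edges and vanishing near the vertices) is dense in $L^2(\cG)$. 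Then von Neumann's theorem gives that $T^\ast T$ is self-adjoint and nonnegative, that $\dom(T^\ast T)$ is a core for $T$, and — the part I actually need — that $T^\ast T$ coincides with the form operator of $f \mapsto \|Tf\|^2_{L^2(\cG)}$ on $\dom(T)$. Comparing with the defining descriptions of $\bH_D$ and $\bH_N$ above (recalling $\gt[f] = \int_\cG |\nabla f|^2\,\rD x = \|\nabla_N f\|^2 = \|\nabla_D f\|^2$ on the respective domains), the two operators in \eqref{eq:FNviaNabla} are associated with the same closed forms, hence equal.

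The main obstacle, such as it is, is not analytic depth but bookkeeping: one must be careful that the closure of the form \eqref{eq:integrationbp}, whose domain is asserted in the text to be $H^1_0(\cG)$, really does have $\|\nabla_D \cdot\|^2$ as its form, i.e. that $\nabla$ is continuous in the $H^1$-topology — which is clear — and that no domain subtleties arise at the (countably many, locally finite) vertices when integrating by parts; this is handled by the edgewise structure and the fact that $H^1_c(\cG)$ functions can be taken to vanish in neighborhoods of vertices. One small point worth making explicit: the identification of $\bH_D$ with the Friedrichs extension and the description $\dom(\bH_D) = \dom(\bH) \cap H^1_0(\cG)$ already appears in the excerpt, so for $\bH_D$ the identity $\bH_D = \nabla_D^\ast\nabla_D$ can alternatively be read off directly by comparing the von Neumann description of $\dom(\nabla_D^\ast\nabla_D)$ with $\dom(\bH) \cap H^1_0(\cG)$; I would include whichever of the two routes is shorter. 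The $\bH_N$ case has no alternative shortcut and genuinely relies on the form-operator identification.
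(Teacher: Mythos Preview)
Your proposal is correct and follows essentially the same approach as the paper: verify that $\nabla_D$ and $\nabla_N$ are closed (via the Hilbert space structure of their domains), invoke von Neumann's theorem to obtain self-adjointness of $\nabla_D^\ast\nabla_D$ and $\nabla_N^\ast\nabla_N$, and then identify them with $\bH_D$ and $\bH_N$ through the representation theorem for closed forms. The paper's version is more terse (citing \cite[Chapter~V.3.7 and Chapter~VI.2.1]{kato}), and your aside about an alternative route for $\bH_D$ via $\dom(\bH)\cap H^1_0(\cG)$ would require the explicit description of $\dom(\nabla_D^\ast)$, which only appears later in the paper.
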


\begin{proof}
Since $H^1_0(\cG)$ and $H^1(\cG)$ are Hilbert spaces, both $\nabla_D$ and $\nabla_N$ are closed operators in $L^2(\cG)$ and hence, by von Neumann's theorem \cite[Chapter~V.3.7]{kato}, $\nabla_D^\ast \nabla_D$ and $\nabla_N^\ast \nabla_N$ are self-adjoint non-negative operators in $L^2(\cG)$. The quadratic forms associated with $\nabla_D^\ast \nabla_D$ and $\nabla_N^\ast \nabla_N$ coincide with, respectively, the quadratic forms of $\bH_D$ and $\bH_N$ and the claim now follows from the representation theorem (see, e.g., \cite[Chapter VI.2.1]{kato}).
\end{proof}

\begin{remark}
Clearly, $\nabla$ and hence both $\nabla_D$ and $\nabla_N$ do depend on the choice of an orientation on $\cG$. However, it is straightforward to see that the second order operators $\bH_D$ and $\bH_N$ do not depend on it.
\end{remark}

Now we are in position to introduce the main object. In the Hilbert space $L^2(\cG)$, define the following operators
\begin{align}\label{eq:GaffneyDef}
\bH_{G,\min} &= \nabla_N^\ast \nabla_D, & \bH_G & = \nabla_D^\ast \nabla_N.
\end{align}
They act edgewise as the negative second derivative and their domains are
\begin{align*} 
\dom(\bH_{G,\min}) &= \{f\in H^1_0(\cG)|\, \nabla f\in \dom(\nabla_N^\ast)\}, \\ 
\dom(\bH_{G}) &= \{f\in H^1(\cG)|\, \nabla f\in \dom(\nabla_D^\ast)\}.
\end{align*}
The operator $\bH_G$ is called the {\em Gaffney Laplacian}. We shall refer to $\bH_{G,\min}$ as the {\em minimal Gaffney Laplacian}. 

\begin{remark}\label{rem:GafDef}
Notice that the above definition is not precisely the original definition of M.\ P.\ Gaffney \cite{gaf} (roughly speaking $H^1$ was replaced in \cite{gaf,gaf55} by $C^1\cap H^1$, see also \cite{ma99,ma05}). The obvious drawback is that the corresponding maximal Laplacian is always non-closed.
 Let us also stress that we are unaware of $\bH_{G,\min}$ in the manifold context and this natural, in our opinion, object seems to be new.
\end{remark}

\begin{remark}\label{rem:Hodge}
One can introduce $0$-forms and $1$-forms on $\cG$ and, upon assigning an orientation, both can be further identified with functions (see, e.g., \cite{p09}).\footnote{Due to the local 1d nature of metric graphs, the space of $2$-forms on $\cG$ is trivial.} 
From this perspective the operator 
\[
	\vec{\Delta} = \nabla_N \nabla_D^\ast
\]
is a metric graph analogue of the Hodge Laplacian on $1$-forms (see \cite[\S 5.1]{bake19}, \cite{go91}, \cite{p09}). 
Indeed (see, e.g., \cite{gil}), the Hodge Laplacian on smooth $k$-forms is given by  
\[
	\Delta_k = \delta^{k+1} \rD^k + \rD^{k-1} \delta^k,
\]
where $\rD^{k}$ is the exterior derivative (mapping $k$-forms to $(k+1)$-forms) and the co-differential $\delta^{k+1}$ is its formal adjoint (mapping $(k+1)$-forms to $k$-forms). Working in the $L^2$-framework and replacing smooth by $H^1$ for metric graphs, one can identify $\rD^1 = \nabla_N$ and $\delta^0 = \nabla_D^\ast$. In particular, the Gaffney Laplacian \eqref{eq:GaffneyDef} 
can be viewed as the Hodge Laplacian on $0$-forms. Let us also stress that due to the supersymmetry, 
the properties of $\bH_G$ and $\vec{\Delta}$ are closely connected.
\end{remark}

\begin{lemma}\label{lem:restr-ext}
Both operators $\bH_G$ and $\bH_{G,\min}$ are restrictions of the maximal Kirchhoff Laplacian $\bH$/extensions of the minimal Kirchhoff Laplacian $\bH_0$,
\begin{align}\label{eq:inclusGKH}
\bH_0 \subseteq \bH_{G,\min} \subseteq  \bH_G \subseteq \bH.
\end{align}
\end{lemma}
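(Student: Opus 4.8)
The plan is to verify the chain of inclusions in \eqref{eq:inclusGKH} from the middle outward, using the factorizations \eqref{eq:GaffneyDef} together with Lemma~\ref{lem:H=nabla} and the identity \eqref{eq:H0*=H}. The central inclusion $\bH_{G,\min}\subseteq\bH_G$ is the easiest: since $\dom(\nabla_D)=H^1_0(\cG)\subseteq H^1(\cG)=\dom(\nabla_N)$, we have $\nabla_D\subseteq\nabla_N$, hence $\nabla_N^\ast\subseteq\nabla_D^\ast$, and therefore $\dom(\bH_{G,\min})=\{f\in H^1_0(\cG)\mid\nabla f\in\dom(\nabla_N^\ast)\}\subseteq\{f\in H^1(\cG)\mid\nabla f\in\dom(\nabla_D^\ast)\}=\dom(\bH_G)$, with the operators agreeing edgewise (both act as $\nabla^\ast\nabla=-\nabla^2$ on their domains).

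Next I would handle the outer two inclusions $\bH_0\subseteq\bH_{G,\min}$ and $\bH_G\subseteq\bH$. For $\bH_G\subseteq\bH$, I would show $\dom(\bH_G)\subseteq\dom(\bH)$: if $f\in H^1(\cG)$ with $\nabla f\in\dom(\nabla_D^\ast)$, then $g:=\nabla_D^\ast\nabla f\in L^2(\cG)$, and for every $\varphi\in H^1_0(\cG)$ one has $\langle\nabla f,\nabla\varphi\rangle=\langle g,\varphi\rangle$; testing against $\varphi\in H^1_c(\cG)\cap C_c^\infty(\cG\setminus\cV)$ (i.e.\ supported on edge interiors) shows edgewise $f_e\in H^2(e)$ with $-\nabla^2 f_e = g_e$, so $f\in H^2(\cG\setminus\cV)$, and testing against $\varphi\in H^1_c(\cG)$ that are nonzero at a single vertex, plus integration by parts edgewise, yields the Kirchhoff condition \eqref{eq:kirchhoff} at every vertex; hence $f\in\dom(\bH)$ and $\bH_G f=g=-\Delta f=\bH f$. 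For $\bH_0\subseteq\bH_{G,\min}$, the quickest route is duality: since $\bH_0\subseteq\bH_G$ would follow once we know $\bH_{G,\min}\subseteq\bH_G$ together with $\bH_0\subseteq\bH_{G,\min}$, I would instead observe $\bH_{G,\min}=\nabla_N^\ast\nabla_D$ and $\bH=\bH_0^\ast=(\nabla_N^\ast\nabla_D)^{**}\supseteq\dots$; more directly, $\bH_{G,\min}^\ast\supseteq(\nabla_N^\ast\nabla_D)^\ast\supseteq\nabla_D^\ast\nabla_N=\bH_G$, and one also checks $\bH_{G,\min}\subseteq\bH$ by the same edgewise/vertex testing argument as above (now for $f\in H^1_0(\cG)$), whence $\bH_0=\overline{\bH_0^0}$; finally $\dom(\bH_0^0)\subseteq\dom(\bH_{G,\min})$ because compactly supported $f\in\dom(\bH)$ lie in $H^1_0(\cG)$ and satisfy $\nabla f\in\dom(\nabla_N^\ast)$ (as $\langle\nabla f,\nabla\varphi\rangle=\langle-\Delta f,\varphi\rangle$ for all $\varphi\in H^1(\cG)$ by integration by parts, the vertex terms canceling thanks to \eqref{eq:kirchhoff} and continuity of $\varphi$), and $\bH_{G,\min}$, being a restriction of $\bH$ that contains $\bH_0^0$, is closed-extension-compatible so $\bH_0=\overline{\bH_0^0}\subseteq\bH_{G,\min}$ since $\bH_{G,\min}$ is closed (it equals $\nabla_N^\ast\nabla_D$ with $\nabla_D$ closed).

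The one point requiring genuine care, and the step I expect to be the main obstacle, is the boundary/vertex bookkeeping in the integration-by-parts arguments: one must justify that for $f\in H^2(\cG\setminus\cV)$ with appropriate summability and $\varphi\in H^1(\cG)$ (resp.\ $H^1_0(\cG)$), the identity $\langle\nabla f,\nabla\varphi\rangle_{L^2(\cG)}=\langle-\Delta f,\varphi\rangle_{L^2(\cG)}+\sum_{v\in\cV}\varphi(v)\sum_{e\in\cE_v}\partial_e f(v)$ holds with an absolutely convergent vertex sum, so that the membership $\nabla f\in\dom(\nabla_N^\ast)$ or $\dom(\nabla_D^\ast)$ is equivalent to the vanishing of the vertex contribution — this is where \eqref{eq:h10}, the continuous embedding $H^1(\cG)\hookrightarrow C_b(\cG)$, and the definition \eqref{eq:defvalueend} of boundary values at ends enter, and where one has to be slightly delicate because $\cG$ has infinitely many vertices and the naive termwise manipulation of $\bigoplus_e$ must be controlled. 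Everything else is a routine unwinding of the definitions \eqref{eq:nablaDN}–\eqref{eq:GaffneyDef}.
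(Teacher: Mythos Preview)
Your overall strategy is sound and close in spirit to the paper's, but there is one genuine gap. In deducing $\bH_0\subseteq\bH_{G,\min}$ you pass from $\bH_0^0\subseteq\bH_{G,\min}$ to $\bH_0=\overline{\bH_0^0}\subseteq\bH_{G,\min}$ by asserting that $\bH_{G,\min}$ is closed, justified by ``it equals $\nabla_N^\ast\nabla_D$ with $\nabla_D$ closed''. That reasoning is invalid: the composition of two closed operators need not be closed, and nothing about the bare factorization $\nabla_N^\ast\nabla_D$ guarantees it. (The paper establishes closedness of $\bH_{G,\min}$ only \emph{later}, in Lemma~\ref{lem:minGaffClosed}, via the identity $\bH_{G,\min}=\bH_G^\ast$.) The clean fix bypasses closedness entirely: since $\bH_D$ and $\bH_N$ are self-adjoint extensions of $\bH_0$ (Lemma~\ref{lem:H=nabla}), one has $\dom(\bH_0)\subseteq\dom(\bH_D)\cap\dom(\bH_N)$; unraveling the definitions together with $\nabla_N^\ast\subseteq\nabla_D^\ast$ gives $\dom(\bH_D)\cap\dom(\bH_N)=\{f\in H^1_0(\cG):\nabla f\in\dom(\nabla_N^\ast)\}=\dom(\bH_{G,\min})$, and the inclusion follows.

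By way of comparison, the paper organizes things differently: instead of checking the four inclusions separately, it proves once and for all the explicit description
\[
\dom(\nabla_D^\ast)=\Big\{g\in H^1(\cG\setminus\cV)\ \Big|\ \sum_{e\in\cE_v}\vec{g_e}(v)=0\ \text{for all}\ v\in\cV\Big\},
\]
from which $\bH_G\subseteq\bH$ and the remaining inclusions are immediate. This is essentially your edgewise/vertex-testing argument, packaged as a single characterization. Finally, your closing worry about absolute convergence of infinite vertex sums is unnecessary here: since $H^1_c(\cG)$ is dense in $H^1_0(\cG)=\dom(\nabla_D)$, one only ever needs compactly supported test functions $\varphi$, for which all vertex sums are finite and no appeal to end-boundary values via \eqref{eq:h10} or \eqref{eq:defvalueend} is required.
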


\begin{proof}
It is straightforward to verify both claims, however, we would like to show that  
\be\label{eq:domnabla*}
\dom(\nabla_D^\ast) = \Big\{f\in H^1(\cG\setminus\cV)\Big|\, 
             \sum_{e\in \cE_v} \vec{f_e}(v) =0\ \text{for all}\ v\in\cV \Big\},
\ee
which then makes the inclusions in \eqref{eq:inclusGKH} obvious. Here we employ the following notation
\[
\vec{f_e}(v) = \begin{cases}\ \ f_e(v), & v \ \text{is terminal}, \\ -f_e(v), & v \ \text{is initial}. \end{cases}
\]

If $f$ belongs to the RHS in~\eqref{eq:domnabla*}, then an integration by parts gives
\be\label{eq:ell_f}
\ell_f(g) := \langle \nabla_D g, f\rangle_{L^2} = -\langle  g, \nabla f\rangle_{L^2}
\ee
for all $g\in H^1(\cG)\cap L^2_c(\cG)$. Clearly, $\ell_f$ extends to a bounded linear functional on $L^2$, which implies that $f\in \dom(\nabla_D^\ast)$ and hence this proves the inclusion ``$\supseteq$". 

Suppose now that $f\in \dom(\nabla_D^\ast)$.  Fixing an edge $e \in \cE$ and taking a test function $g\in H^1_0(\cG)$ such that $g$ equals zero everywhere except $e$, we immediately conclude that $f$ belongs to $H^1$ on $e$. 
Next pick a vertex $v \in \cV$.  Choose $g\in H^1_0(\cG)$ such that $g \equiv 0$ on $\cE\setminus\cE_v$. Moreover, for every $e\in\cE_v$ we assume that $g(x_e) = 1$ if $x_e \in e$ and $|x_e - v| < |e|/4$ and $g(x_e) = 0$ if $|x_e - v| > |e|/2$. Thus we get
	\begin{align*}
		0  = \langle f, \nabla_D g \rangle - \langle \nabla_D^\ast f, g\rangle \
		&= \sum_{e \in \cE_v} \int_e f(\nabla g)^\ast + \nabla f\, g^\ast dx_e\\
		&= \sum_{e \in \cE_v} \vec{f_e}(v) g_e(v)^\ast 
		 = \sum_{e \in \cE_v} \vec{f_e}(v).
	\end{align*}
This also implies that we can perform the integration by parts in \eqref{eq:ell_f} for every  $g\in H^1(\cG)\cap L^2_c(\cG)$. Since $\ell_f$ extends to a bounded linear functional on $L^2(\cG)$, we conclude that $\nabla f\in L^2(\cG)$, which completes the proof.
\end{proof}

Clearly, all four operators in \eqref{eq:inclusGKH} coincide exactly when $\bH_0$ is self-adjoint (and hence all four operators are self-adjoint). 
 Moreover, by the very definition we have 
\begin{align}\label{eq:FNvsGaff}
\bH_{G,\min} &\subseteq \bH_D \subseteq \bH_G, & \bH_{G,\min} &\subseteq \bH_N \subseteq \bH_G.
\end{align}
In particular, $\bH_{G,\min}$ is symmetric, however, $\bH_G$ may not be symmetric (and hence self-adjoint).
The next result provides several self-adjointness criteria for $\bH_G$ including a transparent geometric characterization.

\begin{lemma}\label{cor:Markov}
The following statements are equivalent:
\begin{itemize}
\item[(i)] The Gaffney Laplacian $\bH_G$ is self-adjoint,
\item[(ii)] $\nabla_N = \nabla_D$,
\item[(iii)] $H^1_0(\cG) = H^1(\cG)$,
\item[(iv)] $\bH_0$ has a unique Markovian extension,
\item[(v)] $\cG$ has no finite volume ends, $\gC_0(\cG) = \emptyset$.
\end{itemize}
\end{lemma}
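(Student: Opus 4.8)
The plan is to establish the chain of equivalences by proving $(iii) \Leftrightarrow (v)$ using results already quoted from \cite{kmn19}, then linking the remaining statements through a cycle of implications. The equivalence $(iii) \Leftrightarrow (v)$ is essentially immediate: by \eqref{eq:h10} we have $H^1_0(\cG) = \{f \in H^1(\cG) \,|\, f(\gamma)=0 \text{ for all } \gamma \in \gC(\cG)\}$, and since ends of infinite volume always produce trivial boundary values (as recalled after \eqref{eq:defvalueend}), the only obstruction to $H^1_0(\cG) = H^1(\cG)$ is the existence of a finite volume end $\gamma \in \gC_0(\cG)$ together with a function $f \in H^1(\cG)$ with $f(\gamma) \neq 0$. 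The existence of such an $f$ when $\gC_0(\cG) \neq \emptyset$ follows from the identification of the closure of $H^1(\cG)$ in $C_b(\cG)$ with $C_0(\cG \cup \gC_0(\cG))$, already cited from \cite[\S 3]{kmn19}. Thus $H^1_0(\cG) = H^1(\cG)$ precisely when $\gC_0(\cG) = \emptyset$.

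Next I would handle $(ii) \Leftrightarrow (iii)$, which is trivial from the definitions \eqref{eq:dom_nabla}: the operators $\nabla_D$ and $\nabla_N$ act identically (both are edgewise differentiation) and differ only in their domains $H^1_0(\cG)$ and $H^1(\cG)$, so $\nabla_N = \nabla_D$ if and only if $H^1_0(\cG) = H^1(\cG)$. For $(ii) \Rightarrow (i)$: if $\nabla_N = \nabla_D$ then $\bH_G = \nabla_D^\ast \nabla_N = \nabla_D^\ast \nabla_D = \bH_D$ by \eqref{eq:GaffneyDef} and Lemma \ref{lem:H=nabla}, and $\bH_D$ is self-adjoint as the Friedrichs extension. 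For the converse $(i) \Rightarrow (ii)$: from \eqref{eq:FNvsGaff} we have $\bH_D \subseteq \bH_G$ and $\bH_N \subseteq \bH_G$; if $\bH_G$ is self-adjoint then taking adjoints in $\bH_D \subseteq \bH_G$ gives $\bH_G \subseteq \bH_D$, hence $\bH_G = \bH_D$ and similarly $\bH_G = \bH_N$, so $\bH_D = \bH_N$. Since $\bH_D = \nabla_D^\ast \nabla_D$ and $\bH_N = \nabla_N^\ast \nabla_N$ have the same associated quadratic form (by the representation theorem, working backwards through Lemma \ref{lem:H=nabla}), the form domains $H^1_0(\cG)$ and $H^1(\cG)$ coincide, giving $(iii)$ and hence $(ii)$.

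Finally, the equivalence with $(iv)$, uniqueness of the Markovian extension, is where I expect to lean most heavily on background theory and where the main conceptual content lies. The Dirichlet and Neumann Laplacians $\bH_D$ and $\bH_N$ are the two extreme Markovian (Dirichlet-form) extensions of $\bH_0$ in the sense of the theory of Dirichlet forms: any Markovian extension lies between them in the form order. Hence $\bH_0$ has a unique Markovian extension if and only if $\bH_D = \bH_N$, which by the argument in the previous paragraph is equivalent to $H^1_0(\cG) = H^1(\cG)$, i.e.\ $(iii)$. The one point requiring care is verifying that $\gt_N$ is indeed a Dirichlet form (i.e.\ that $H^1(\cG)$ with $\gt$ is Markovian) and that $\bH_D$, $\bH_N$ are genuinely the minimal and maximal Markovian extensions; this is standard for strongly local forms but should be cited — I would appeal to \cite[Chapter I]{ebe} and the general theory, together with \cite{kmn19} where this Markovian extremality in the metric graph setting is presumably established. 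The main obstacle, then, is not any single hard estimate but rather assembling the correct citations for the Dirichlet-form facts linking $(iv)$ to the sandwiching $\bH_D \subseteq \cdot \subseteq \bH_N$; everything else is bookkeeping with adjoints and form domains.
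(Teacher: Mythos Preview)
Your proposal is correct and follows essentially the same route as the paper: both treat $(iii)\Leftrightarrow(iv)$ as a standard Dirichlet-form fact and $(iii)\Leftrightarrow(v)$ as a citation to \cite{kmn19}, then handle the remaining equivalences via the elementary observation that $\nabla_D=\nabla_N$ exactly when their domains $H^1_0(\cG)$ and $H^1(\cG)$ coincide. The paper's proof is in fact much terser than yours --- it dispatches $(i)\Leftrightarrow(ii)\Leftrightarrow(iii)$ in a single sentence --- whereas you spell out the $(i)\Rightarrow(ii)$ direction explicitly by using the inclusions $\bH_D,\bH_N\subseteq\bH_G$ from \eqref{eq:FNvsGaff}, taking adjoints, and invoking uniqueness of the form associated to a non-negative self-adjoint operator; this added detail is correct and arguably clarifies a step the paper leaves implicit.
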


\begin{proof}
The equivalence $(iii)\Leftrightarrow (iv)$ is well known; $(iii)\Leftrightarrow (v)$ was established in \cite[Corollary 3.12]{kmn19}. The remaining equivalences follow upon noting that $\nabla_D=\nabla_N$ if and only if $H^1_0(\cG) = H^1(\cG)$.
\end{proof}

Recall that an extension $\wt\bH$ of the minimal Kirchhoff Laplacian $\bH_0$ is called \emph{Markovian} if $\wt\bH$ is a non-negative self-adjoint extension and the corresponding quadratic form is a \emph{Dirichlet form} (for further details we refer to \cite[Chapter 1]{fuk10}).  Hence the associated semigroup $\E^{-t\wt\bH}$, $t>0$ as well as resolvents $(\wt\bH + \lambda)^{-1}$, $\lambda>0$ are Markovian: i.e., are both \emph{positivity preserving} (map non-negative functions to non-negative functions) and \emph{$L^\infty$-contractive} (map the unit ball of $L^\infty(\mathcal G)$ into itself). Notice that both the Dirichlet and Neumann Laplacians are Markovian extensions.
A self-adjoint extension $\wt{\bH}$ of $\bH_0$ is called a {\em finite energy extension} if its domain is contained in $H^1(\cG)$. In particular, every Markovian extension is a finite energy extension (for further details we refer to \cite[\S 5]{kmn19}). 

The importance of the  Gaffney Laplacian  in the study of Markovian and, more generally, finite energy extensions of $\bH_0$ stems from the following fact.

\begin{lemma}
The domain of the Gaffney Laplacian is given by
\begin{align}\label{eq:domHG}
\dom(\bH_G) = \dom(\bH)\cap H^1(\cG).
\end{align}
In particular, $\wt{\bH}$ is a Markovian/finite energy self-adjoint extension of $\bH_0$ if and only if $\wt{\bH}$ is a Markovian/self-adjoint restriction of $\bH_G$.
\end{lemma}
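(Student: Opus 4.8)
\noindent\emph{Proof strategy.} The plan is to establish the domain identity \eqref{eq:domHG} by unwinding the definition $\bH_G=\nabla_D^\ast\nabla_N$ with the help of the description \eqref{eq:domnabla*} of $\dom(\nabla_D^\ast)$ obtained in Lemma~\ref{lem:restr-ext}, and then to deduce the statement about extensions from \eqref{eq:H0*=H} by elementary manipulations with adjoints.

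First I would prove the inclusion $\dom(\bH_G)\subseteq\dom(\bH)\cap H^1(\cG)$. By definition, $f\in\dom(\bH_G)$ lies in $H^1(\cG)$ and satisfies $\nabla f\in\dom(\nabla_D^\ast)$, which by \eqref{eq:domnabla*} means $\nabla f\in H^1(\cG\setminus\cV)$ together with $\sum_{e\in\cE_v}\vec{(\nabla f)_e}(v)=0$ for every $v\in\cV$. Since $f\in H^1(\cG)$ already guarantees $f\in L^2(\cG)$, the membership $\nabla f\in H^1(\cG\setminus\cV)$ upgrades to $f\in H^2(\cG\setminus\cV)$. The one point requiring care is the orientation bookkeeping: for a vertex $v$ incident to an edge $e$ one checks that $\vec{(\nabla f)_e}(v)=-\partial_e f(v)$ irrespective of whether $v$ is the initial or the terminal endpoint of $e$, with $\partial_e f(v)$ the inward derivative from \eqref{eq:tr_fe}; thus the vertex relation for $\nabla f$ is exactly the current conservation in \eqref{eq:kirchhoff}, while continuity of $f$ at each vertex (the first Kirchhoff condition) is already encoded in $f\in H^1(\cG)$. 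Hence $f\in\dom(\bH)$. For the reverse inclusion I would reverse this chain: if $f\in\dom(\bH)\cap H^1(\cG)$, then $f\in H^2(\cG\setminus\cV)$ gives $\nabla^2 f\in L^2(\cG)$ and $f\in H^1(\cG)$ gives $\nabla f\in L^2(\cG)$, so $\nabla f\in H^1(\cG\setminus\cV)$; the Kirchhoff derivative condition rewrites, via $\vec{(\nabla f)_e}(v)=-\partial_e f(v)$, as $\sum_{e\in\cE_v}\vec{(\nabla f)_e}(v)=0$, so $\nabla f\in\dom(\nabla_D^\ast)$ by \eqref{eq:domnabla*} and therefore $f\in\dom(\bH_G)$. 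This proves \eqref{eq:domHG}.

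Next I would treat the statement on extensions. Let $\wt\bH$ be a self-adjoint extension of $\bH_0$. Then $\wt\bH=\wt\bH^\ast\subseteq\bH_0^\ast=\bH$ by \eqref{eq:H0*=H}, so $\wt\bH$ is automatically a restriction of $\bH$; since $\bH_G\subseteq\bH$ acts in the same way, \eqref{eq:domHG} shows that $\dom(\wt\bH)\subseteq H^1(\cG)$ holds if and only if $\dom(\wt\bH)\subseteq\dom(\bH)\cap H^1(\cG)=\dom(\bH_G)$, i.e.\ if and only if $\wt\bH$ is a restriction of $\bH_G$. Thus $\wt\bH$ is a finite energy self-adjoint extension of $\bH_0$ exactly when it is a self-adjoint restriction of $\bH_G$. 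Conversely, if $\wt\bH$ is a self-adjoint restriction of $\bH_G$, then $\wt\bH\subseteq\bH_G\subseteq\bH=\bH_0^\ast$, whence $\wt\bH=\wt\bH^\ast\supseteq\bH_0^{\ast\ast}=\bH_0$ (recall that $\bH_0$ is closed), so $\wt\bH$ is indeed such an extension, and the two notions coincide. The Markovian version is then immediate: every Markovian extension of $\bH_0$ is in particular a finite energy self-adjoint extension (as recalled above), while being Markovian is a property of the operator itself, so under the correspondence just established the Markovian extensions of $\bH_0$ are precisely the Markovian restrictions of $\bH_G$.

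I expect the only genuinely delicate point to be the sign/orientation computation identifying $\vec{(\nabla f)_e}(v)$ with $-\partial_e f(v)$, which is what makes the vertex condition in \eqref{eq:domnabla*} coincide with the Kirchhoff condition \eqref{eq:kirchhoff}; the remaining arguments are routine bookkeeping with \eqref{eq:domnabla*}, \eqref{eq:H0*=H}, the definition of $\dom(\bH)$, and the previously noted fact that Markovian extensions are of finite energy.
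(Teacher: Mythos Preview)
Your proposal is correct and follows essentially the same approach as the paper: both arguments rest on the explicit description \eqref{eq:domnabla*} of $\dom(\nabla_D^\ast)$ established in Lemma~\ref{lem:restr-ext}. The paper's own proof is extremely terse---two sentences, with the first inclusion attributed to ``the definition of $\bH_G$'' (implicitly invoking the inclusion $\bH_G\subseteq\bH$ already shown in Lemma~\ref{lem:restr-ext}) and the second to \eqref{eq:domnabla*}---while you spell out the orientation computation $\vec{(\nabla f)_e}(v)=-\partial_e f(v)$ and the adjoint manipulations for the extension statement in full; this added detail is accurate and does not deviate from the paper's route.
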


\begin{proof}
The inclusion $\dom(\bH_G) \subseteq \dom(\bH)\cap H^1(\cG)$ follows from the definition of $\bH_G$. The converse inclusion is immediate from~\eqref{eq:domnabla*}.
\end{proof}

\begin{remark}
The equivalences $(i)\Leftrightarrow(ii)\Leftrightarrow(iii)\Leftrightarrow(iv)$ are well known in the manifold context, where the most subtle part is to show that the Dirichlet and Neumann Laplacians are extremal extensions in the set of all Markovian extensions (cf. \cite[Theorem 1.7 and \S 3.5]{gm}). Let us also stress that the proper analog of the equality $\nabla_N = \nabla_D$ in the manifold context should read as ``the gradient is a formal skew-adjoint of the divergence operator" (see Remark \ref{rem:Hodge}) and this property was called {\em negligible boundary} in \cite{gaf,gaf55}. Moreover, the central result of \cite{gaf} is that negligible boundary is sufficient for the (essential) self-adjointness of the Gaffney Laplacian (notice that the quadratic forms approach was not available at that time).
\end{remark}

As in the case of the maximal and minimal Kirchhoff Laplacians, there is a close connection between the Gaffney Laplacians.
\begin{lemma}\label{lem:minGaffClosed}
 The minimal Gaffney Laplacian is closed in $L^2(\cG)$ and 
\be\label{eq:HGmin=ast}
\bH_{G,\min} = \bH_G^\ast.
\ee
\end{lemma}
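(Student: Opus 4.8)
The plan is to prove the operator identity $\bH_{G,\min}=\bH_G^\ast$; the closedness of $\bH_{G,\min}$ then comes for free, since the adjoint of a densely defined operator is always closed and $\bH_G$ is densely defined ($\bH_0\subseteq\bH_G$ by Lemma~\ref{lem:restr-ext}).

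For the inclusion $\bH_{G,\min}\subseteq\bH_G^\ast$ one only needs the elementary fact that $(ST)^\ast\supseteq T^\ast S^\ast$ for densely defined operators $S,T$ with $ST$ densely defined. Applying it with $S=\nabla_D^\ast$ and $T=\nabla_N$, and using that $\nabla_D$ is closed so that $\nabla_D^{\ast\ast}=\nabla_D$, yields
\[
\bH_G^\ast=(\nabla_D^\ast\nabla_N)^\ast\supseteq\nabla_N^\ast\nabla_D=\bH_{G,\min},
\]
the two operators agreeing on $\dom(\bH_{G,\min})$. (Alternatively, this inclusion is just a double integration by parts, using the domain descriptions following~\eqref{eq:GaffneyDef} together with~\eqref{eq:domnabla*}.)

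The converse inclusion $\bH_G^\ast\subseteq\bH_{G,\min}$ is where the structure of the problem enters, but only in a soft way: the point is that $\bH_G$ is squeezed \emph{from both sides} between the self-adjoint operators $\bH_D$ and $\bH_N$. Indeed, $\bH_D\subseteq\bH_G$ and $\bH_N\subseteq\bH_G$ by~\eqref{eq:FNvsGaff}; passing to adjoints and using $\bH_D^\ast=\bH_D$, $\bH_N^\ast=\bH_N$ gives $\bH_G^\ast\subseteq\bH_D$ and $\bH_G^\ast\subseteq\bH_N$, hence $\dom(\bH_G^\ast)\subseteq\dom(\bH_D)\cap\dom(\bH_N)$. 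It therefore suffices to observe that $\dom(\bH_D)\cap\dom(\bH_N)=\dom(\bH_{G,\min})$. The inclusion ``$\supseteq$'' is~\eqref{eq:FNvsGaff}. For ``$\subseteq$'', if $f\in\dom(\bH_D)=\dom(\nabla_D^\ast\nabla_D)$ then $f\in H^1_0(\cG)$, and if in addition $f\in\dom(\bH_N)=\dom(\nabla_N^\ast\nabla_N)$ then $\nabla_N f\in\dom(\nabla_N^\ast)$; since $\nabla_N f=\nabla_D f$ for $f\in H^1_0(\cG)$, this says precisely that $f\in\dom(\nabla_N^\ast\nabla_D)=\dom(\bH_{G,\min})$. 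Combining the two inclusions gives $\dom(\bH_G^\ast)=\dom(\bH_{G,\min})$, and together with the previous paragraph this forces $\bH_{G,\min}=\bH_G^\ast$.

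I do not expect a genuine obstacle here: the whole argument is bookkeeping with domains of compositions of unbounded operators and with the inclusion-reversing behaviour of adjoints. The one conceptual step worth isolating — and the reason no elliptic-regularity analysis of $\dom(\bH_G^\ast)$ is required — is the observation that $\bH_G^\ast$ is trapped inside $\dom(\bH_D)\cap\dom(\bH_N)$, a set whose elements are fully described by the definitions~\eqref{eq:GaffneyDef}. One should only pause to confirm that $\bH_{G,\min}$, $\bH_G^\ast$, $\bH_D$ and $\bH_N$ all act identically on the common domain, which is automatic from $\bH_{G,\min}\subseteq\bH_N$ and $\bH_G^\ast\subseteq\bH_N$.
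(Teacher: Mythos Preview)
Your proof is correct and follows essentially the same route as the paper: both directions use the abstract inclusion $(ST)^\ast\supseteq T^\ast S^\ast$ for $\bH_{G,\min}\subseteq\bH_G^\ast$, and for the reverse inclusion both exploit $\bH_D,\bH_N\subseteq\bH_G$ (hence $\bH_G^\ast\subseteq\bH_D\cap\bH_N$) together with the identification $\dom(\bH_D)\cap\dom(\bH_N)=\dom(\bH_{G,\min})$. Your write-up is slightly more explicit about the domain bookkeeping, but the argument is the same.
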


\begin{proof}
By the very definition of $\bH_G$, we get 
\[
\nabla_N^\ast \nabla_D \subseteq \bH_G^\ast 
\]
and hence $\bH_{G,\min}\subseteq \bH_G^\ast$. To prove the converse inclusion, observe that $\bH_G^\ast\subseteq \bH_D$ and $\bH_G^\ast\subseteq \bH_N$. Taking into account Lemma~\ref{lem:H=nabla}, we thus get
\begin{align*}
\dom(\bH_{G}^\ast) & \subseteq \dom(\bH_D)\cap\dom(\bH_N) \\
& = \{f\in H^1_0(\cG)|\, \nabla_D f\in \dom(\nabla_D^\ast)\}\cap\{f\in H^1(\cG)|\, \nabla_N f\in \dom(\nabla_N^\ast)\}\\
& = \{f\in H^1_0(\cG)|\, \nabla_D f\in \dom(\nabla_N^\ast)\} \\
&= \dom(\bH_{G,\min}),
\end{align*}
which implies~\eqref{eq:HGmin=ast}. Since the adjoint is always closed, the first claim follows.
\end{proof}
 
\begin{remark}
The fact that $\bH_{G,\min}$ is a closed operator can be seen by noting that its domain is the intersection of domains of the Friedrichs and Neumann extensions, 
 \be\label{eq:Min=FcapN}
 \dom(\bH_{G,\min}) = \dom(\bH_D)\cap \dom(\bH_N).
 \ee
 Indeed, one simply needs to compare the definition of $\bH_{G,\min}$ with~\eqref{eq:FNviaNabla} and take into account that $\nabla_N^\ast\subseteq \nabla_D^\ast$. Since both $\bH_D$ and $\bH_N$ are closed, one concludes that so is $\bH_{G,\min}$.
\end{remark}
 
 Our first main result is the following connection between deficiency indices of $\bH_{G,\min}$ and graph ends.
 
 \begin{theorem}\label{th:n-pm=C0}
The deficiency indices of the minimal Gaffney Laplacian $\bH_{G,\min}$ coincide with the number of finite volume ends of $\cG$,
\be\label{eq:n_pm=C0}
\Nr_\pm(\bH_{G,\min}) = \#\gC_0(\cG).
\ee
 \end{theorem}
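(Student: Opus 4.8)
The plan is to compute the deficiency subspace $\cN_z(\bH_{G,\min}) = \ker(\bH_{G,\min}^\ast - z)$ for, say, $z = -1$ (using non-negativity, $\Nr_+ = \Nr_-$, and working with $z$ on the negative real axis avoids sign subtleties), and to exhibit an explicit linear isomorphism between it and the space of functions on the end compactification supported on $\gC_0(\cG)$. By Lemma~\ref{lem:minGaffClosed} we have $\bH_{G,\min}^\ast = \bH_G$, so I need to understand $\ker(\bH_G + 1) = \{f \in \dom(\bH)\cap H^1(\cG) : -\nabla^2 f = -f\}$. First I would recall from \cite{kmn19} (and the discussion around \eqref{eq:h10}) that $\ker(\bH + 1)\cap H^1(\cG)$ is naturally parameterized by boundary values at finite volume ends: every $f \in H^1(\cG)$ has well-defined end values $f(\gamma)$ by \eqref{eq:defvalueend}, these vanish on infinite volume ends, and the trace map $f \mapsto (f(\gamma))_{\gamma\in\gC_0(\cG)}$ realizes $H^1(\cG)/H^1_0(\cG)$ as (a dense subspace of) $C_0(\gC_0(\cG))$, or rather the space of $\ell^2$-summable boundary data in the relevant weighted sense.

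The key steps, in order: (1) Show $\ker(\bH_G + 1)$ is precisely the space of $H^1$-harmonic (for $\bH+1$) functions, i.e. solutions $f$ of $-f'' + f = 0$ edgewise satisfying Kirchhoff conditions at all vertices and lying in $H^1(\cG)$; note there are no vertex boundary conditions at "ends" beyond the $H^1$ requirement itself. (2) Use the decomposition $H^1(\cG) = H^1_0(\cG) \oplus_{\gt+\langle\cdot,\cdot\rangle} \cH$ where $\cH$ is the $(\gt+\langle\cdot,\cdot\rangle)$-orthogonal complement; by the first representation theorem / standard Krein-von Neumann-type reasoning, $\cH$ consists exactly of the $(\bH+1)$-harmonic $H^1$ functions, so $\cN_{-1}(\bH_{G,\min}) = \cH$. (3) Establish that the trace map $f \mapsto (f(\gamma))_{\gamma \in \gC_0(\cG)}$ is a bijection from $\cH$ onto $\ell^2(\gC_0(\cG))$ (with an appropriate weight coming from the "harmonic capacity" of each end), and in particular $\dim \cH = \#\gC_0(\cG)$: surjectivity because for finitely many ends one solves a Dirichlet-type problem, and for the general (countable) case one takes limits using that $H^1(\cG)$-closure of $H^1_c(\cG)$ is $H^1_0(\cG)$ together with the isomorphism of $\overline{H^1(\cG)}$ (inside $C_b$) with $C_0(\cG\cup\gC_0(\cG))$ from \cite{kmn19}; injectivity because $f \in \cH$ with all end values zero lies in $H^1_0(\cG)$ by \eqref{eq:h10}, hence $f \perp \cH$, hence $f = 0$.

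I expect the main obstacle to be step (3), specifically matching the cardinality $\#\gC_0(\cG)$ exactly — showing the trace map is *onto* the full space of $\ell^2$-summable end data (not just a proper subspace) when there are infinitely many finite volume ends. The subtle point is that each finite volume end $\gamma$ must support a genuine $(\bH+1)$-harmonic $H^1$ function with $f(\gamma)\neq 0$ (this is where the finite-volume hypothesis is used: infinite volume ends would force divergence of the $L^2$ or energy norm), and that finitely many such "end modes" remain linearly independent and their span is $(\gt + \langle\cdot,\cdot\rangle)$-dense in $\cH$. I would handle this by an exhaustion argument: fix a sequence $\wt\cG_k \uparrow \cG$ of finite subgraphs, solve the mixed Dirichlet/Neumann problem $-f'' + f = 0$ on $\wt\cG_k$ with prescribed values on the (finitely many) boundary vertices and Neumann-glued elsewhere, and pass to the limit, using the maximum principle and the $H^1$-embedding into $C_0(\cG\cup\gC_0(\cG))$ to control the limit and identify its end values. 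A secondary technical point worth flagging is ensuring that when $\#\gC_0(\cG) = \infty$ both sides of \eqref{eq:n_pm=C0} are $+\infty$ with the same convention, which is immediate once one produces an infinite linearly independent family in $\cN_{-1}(\bH_{G,\min})$.
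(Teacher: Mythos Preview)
There is a genuine gap at the very first step. Lemma~\ref{lem:minGaffClosed} states $\bH_{G,\min} = \bH_G^\ast$, which upon taking adjoints yields $\bH_{G,\min}^\ast = \overline{\bH_G}$, \emph{not} $\bH_{G,\min}^\ast = \bH_G$. The equality $\overline{\bH_G} = \bH_G$ is precisely the question of whether the Gaffney Laplacian is closed; this is settled only in Theorem~\ref{th:main}, whose proof in turn uses the present theorem, so invoking it here would be circular. Consequently your step~(2) establishes only the inclusion $\cH = \ker(\bH_G + 1) \subseteq \ker(\bH_{G,\min}^\ast + 1) = \cN_{-1}(\bH_{G,\min})$, i.e.\ the lower bound $\dim\cH \le \Nr_\pm(\bH_{G,\min})$. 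When $\#\gC_0(\cG)=\infty$ this lower bound suffices (and indeed this is what the paper does in that case, via~\eqref{eq:defsubHG}), but for $\#\gC_0(\cG)<\infty$ you still owe an upper bound.

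The fix is short. Since $\bH_N$ is a self-adjoint extension of the closed symmetric operator $\bH_{G,\min}$, von Neumann's formulas give $\Nr_\pm(\bH_{G,\min}) = \dim\big(\dom(\bH_N)/\dom(\bH_{G,\min})\big)$. Using $\dom(\bH_{G,\min}) = \dom(\bH_N)\cap\dom(\bH_D) = \dom(\bH_N)\cap H^1_0(\cG)$ (see~\eqref{eq:Min=FcapN}), the inclusion $\dom(\bH_N)\hookrightarrow H^1(\cG)$ induces an injection $\dom(\bH_N)/\dom(\bH_{G,\min}) \hookrightarrow H^1(\cG)/H^1_0(\cG) \cong \cH$, whence $\Nr_\pm(\bH_{G,\min}) \le \dim\cH$. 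Combined with the lower bound this gives $\Nr_\pm(\bH_{G,\min}) = \dim\cH = \dim(H^1(\cG)/H^1_0(\cG))$, and your step~(3) then reduces to identifying the latter with $\#\gC_0(\cG)$, which is the content of the results from \cite[\S3 and \S6]{kmn19}. This is close in spirit to the paper's own argument, which in the finite case computes $\dim\big(\dom(\bH_N)/\dom(\bH_{G,\min})\big)$ directly from \cite{kmn19} without introducing the orthogonal complement $\cH$. One minor further point: in step~(3) the trace map need not be onto $\ell^2(\gC_0(\cG))$ in any canonical sense (its image lies in $c_0(\gC_0(\cG))$ and its precise description is geometry-dependent); for the dimension count you only need surjectivity onto $\C^{\#\gC_0(\cG)}$ in the finite case, which again comes from \cite{kmn19}.
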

 
 \begin{proof}
 If $\#\gC_0(\cG) <\infty$, then using the results of \cite[\S 6]{kmn19} one can easily see that $\dim(\dom(\bH_N)/\dom(\bH_{G,\min})) = \#\gC_0(\cG)$ (indeed, combine Prop.~6.6(i) and Corollary 6.7 with Theorem 3.11 from \cite{kmn19}). This immediately implies~\eqref{eq:n_pm=C0} since $\bH_N$ is a self-adjoint extension of $\bH_{G,\min}$ (cf.~\cite[Theorem~13.10]{schm}).

It remains to consider the case $\#\gC_0(\cG) = \infty$. By~\eqref{eq:domHG} and \cite[Lemma 4.7]{kmn19}, 
\be\label{eq:defsubHG}
\dim(\ker(\bH_G - \lambda)) = \dim(\ker(\bH - \lambda) \cap H^1(\cG)) =  \# \gC_0(\cG)
\ee
for all negative real $\lambda$. 
  It remains to notice that $\bH_G\subseteq \bH_{G,\min}^\ast$ and hence the claim follows from the positivity of $\bH_{G,\min}$.
 \end{proof}
 
In contrast to the minimal Gaffney Laplacian, $\bH_G$ is not automatically closed, that is, $\bH_{G,\min}^\ast = \overline{\bH_G}$, although it is not necessarily true that
\be\label{eq:HG=ast}
\bH_{G,\min}^\ast =\bH_G.
\ee
Our second main result provides necessary and sufficient conditions for the Gaffney Laplacian to be closed.

\begin{theorem}\label{th:main}
Let $\cG$ be a metric graph satisfying Hypothesis \ref{hyp:locfin} and let $\bH_G$ be the corresponding Gaffney Laplacian.
\begin{itemize}
\item[(i)]  If $\#\gC_0(\cG)<\infty$, then $\bH_G$ is closed and~\eqref{eq:HG=ast} holds true.
\item[(ii)] If $\cG$ contains a non-free finite volume end, then $n_\pm(\bH_{G,\min}) = \infty$ and $\bH_G$ is not closed.
\end{itemize}
\end{theorem}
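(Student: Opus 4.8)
The plan is to handle the two parts separately, using the structural results about finite volume ends from \cite{kmn19} together with the relations \eqref{eq:HGmin=ast}, \eqref{eq:domHG} and \eqref{eq:Min=FcapN}. For part (i), assume $\#\gC_0(\cG) < \infty$. By Theorem \ref{th:n-pm=C0} we know $\Nr_\pm(\bH_{G,\min}) = \#\gC_0(\cG) < \infty$, so $\bH_{G,\min}$ is a closed symmetric operator with finite and equal deficiency indices; in particular $\dim(\dom(\bH_{G,\min}^\ast)/\dom(\bH_{G,\min})) = 2\#\gC_0(\cG) < \infty$. The point is that $\bH_G$ is sandwiched, $\bH_{G,\min} \subseteq \bH_G \subseteq \bH_{G,\min}^\ast$ (the second inclusion from Lemma \ref{lem:minGaffClosed} combined with $\bH_G \subseteq (\bH_G^\ast)^\ast$), so $\dom(\bH_G)$ is a finite-dimensional extension of the closed domain $\dom(\bH_{G,\min})$ inside the closed space $\dom(\bH_{G,\min}^\ast)$ (both with the graph norm of $\bH_{G,\min}^\ast$, which coincides edgewise with the graph norm of $\bH$). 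A finite-dimensional algebraic extension of a closed subspace is closed, hence $\dom(\bH_G)$ is closed in the graph norm and $\bH_G$ is a closed operator. To upgrade closedness to the equality \eqref{eq:HG=ast}, i.e.\ $\bH_G = \bH_{G,\min}^\ast = \overline{\bH_G}$, I would count dimensions: by \eqref{eq:defsubHG} (which holds for any $\#\gC_0(\cG)$) one has $\dim(\ker(\bH_G - \lambda)) = \#\gC_0(\cG)$ for $\lambda < 0$, while $\dim(\ker(\bH_{G,\min}^\ast - \lambda)) = \dim(\ker(\bH - \lambda)\cap H^1(\cG)) = \#\gC_0(\cG)$ as well; since $\bH_G \subseteq \bH_{G,\min}^\ast$ and a resolvent point of $\bH_{G,\min}^\ast$ can be chosen (e.g.\ any $\lambda < 0$ is below the Friedrichs extension), comparing kernels at $\lambda$ forces $\dom(\bH_G) = \dom(\bH_{G,\min}^\ast)$. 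Alternatively, one invokes \cite[Prop.~6.6, Cor.~6.7, Thm.~3.11]{kmn19} directly to identify $\dom(\bH_G)/\dom(\bH_{G,\min})$ with the boundary data living on the finitely many finite volume ends, which is automatically exhausted by $\bH_G$.

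For part (ii), suppose $\cG$ contains a non-free finite volume end $\gamma_0$. The first assertion, $\Nr_\pm(\bH_{G,\min}) = \infty$, is immediate from Theorem \ref{th:n-pm=C0}: a non-free end cannot be separated from the other ends by a finite vertex set, and by the standard structure of the end compactification (every neighborhood of $\gamma_0$ contains infinitely many ends, and since $\gamma_0$ itself has finite volume a cofinal family of these can be arranged to have finite volume too — this is where I would need a small lemma, essentially the observation in \cite{kmn19} that finite volume is inherited by ``most'' nearby ends when the total volume near $\gamma_0$ is finite), so $\#\gC_0(\cG) = \infty$.

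The genuinely delicate part is showing $\bH_G$ is \emph{not closed}, equivalently $\dom(\bH_G) \subsetneq \dom(\bH_{G,\min}^\ast) = \dom(\overline{\bH_G})$. The strategy I would pursue: by \eqref{eq:defsubHG} and Theorem \ref{th:n-pm=C0}, both $\ker(\bH_G - \lambda)$ and $\ker(\bH_{G,\min}^\ast - \lambda)$ have dimension $\#\gC_0(\cG) = \infty$ for $\lambda < 0$, so a naive dimension count fails and one must exhibit an explicit element of $\dom(\overline{\bH_G})\setminus\dom(\bH_G)$. The idea is to build, on a nested exhausting sequence of subgraphs ``around'' $\gamma_0$, a sequence $f_k \in \dom(\bH_G) \subseteq H^1(\cG)$ converging in the graph norm of $\bH$ to some $f \in \dom(\bH)$ whose limiting boundary values on the ends accumulating at $\gamma_0$ fail to satisfy the $H^1$-membership that $\dom(\bH_G)$ requires — concretely, $f$ will have nonzero values $f(\gamma) \neq 0$ on infinitely many finite volume ends clustering at $\gamma_0$ but with $\sum |f(\gamma)|^2$ or the relevant trace not controlled, so $f \notin H^1(\cG)$ although $f$ is a graph-norm limit of $H^1$ functions (this is exactly the phenomenon that $H^1(\cG)$ is not closed in the $\bH$-graph norm when infinitely many finite volume ends pile up). The non-freeness of $\gamma_0$ is essential here: it is what guarantees there is no finite ``firewall'' of vertices isolating the bad cluster, so one cannot truncate the $f_k$ near a compact set without destroying the limit, and the failure of closedness propagates to the full operator. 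I expect this construction — balancing the $L^2$ norm, the energy $\gt[f_k]$, and the second-order term so that the sequence is Cauchy in the graph norm while its limit escapes $H^1(\cG)$ — to be the main obstacle; it will likely reuse the quantitative estimates on harmonic-type functions localized near a finite volume end from \cite[\S 4, \S 6]{kmn19}, and possibly be cross-referenced to Prop.~\ref{prop:graphseq} announced in the introduction.
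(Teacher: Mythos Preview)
Your treatment of part~(i) is essentially the paper's argument: the paper uses the decomposition $\dom(\bH_{G,\min}^\ast) = \dom(\bH_D) \dotplus \cN_z(\bH_{G,\min})$ for $z$ in the resolvent set of $\bH_D$, combines $\dim\cN_z = \#\gC_0(\cG)$ from Theorem~\ref{th:n-pm=C0} with $\dim\ker(\bH_G - z) = \#\gC_0(\cG)$ from~\eqref{eq:defsubHG}, and concludes $\dom(\bH_G) = \dom(\bH_{G,\min}^\ast)$. One small wobble in your write-up: the identity $\ker(\bH_{G,\min}^\ast - \lambda) = \ker(\bH - \lambda)\cap H^1(\cG)$ already presupposes $\bH_{G,\min}^\ast = \bH_G$, which is the conclusion; use $\dim\ker(\bH_{G,\min}^\ast - \lambda) = \Nr_\pm(\bH_{G,\min})$ directly instead.

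For part~(ii), your argument that a non-free finite volume end forces $\#\gC_0(\cG)=\infty$ (any finite-volume neighbourhood of a non-isolated end contains infinitely many ends, all of finite volume) is correct and matches the paper. Your plan for non-closedness, however, is genuinely different and, as you yourself flag, not carried through. The paper does \emph{not} try to exhibit an explicit element of $\dom(\overline{\bH_G})\setminus\dom(\bH_G)$. Instead it first observes that, since $\nabla\colon H^1(\cG)\to L^2(\cG)$ is bounded, closedness of $\bH_G$ is \emph{equivalent} to the Sobolev-type inequality
\[
\|\nabla f\|_{L^2(\cG)}^2 \le C\big(\|f\|_{L^2(\cG)}^2 + \|\bH f\|_{L^2(\cG)}^2\big),\qquad f\in\dom(\bH_G),
\]
and then simply violates this inequality. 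From the non-free finite volume end one extracts connected subgraphs $\cG_n$ with $\#\partial\cG_n<\infty$, $\#\gC_0(\cG_n)=\infty$ and $\vol(\cG_n)\to 0$; on each $\cG_n$ one picks (using~\eqref{eq:defsubHG} and $\#\gC_0(\cG_n)>\#\partial\cG_n$) a real $\lambda$-eigenfunction $f_n$ of the Gaffney Laplacian on $\cG_n$ with $f_n|_{\partial\cG_n}=0$ and $\|f_n\|_\infty=1$, extends it by zero to $\cG$, and checks that $\|f_n\|_{L^2}^2+\|\bH f_n\|_{L^2}^2=(1+\lambda^2)\|f_n\|_{L^2}^2\le(1+\lambda^2)\vol(\cG_n)\to 0$, while a Cauchy--Schwarz estimate along a path from $\partial\cG_n$ to a point where $|f_n|\ge 1/2$ keeps $\|\nabla f_n\|_{L^2}$ bounded below. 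This is considerably shorter than your route: one never needs to identify a limit function or verify it lies outside $H^1(\cG)$, only to show that the ratio $\|\nabla f_n\|^2/(\|f_n\|^2+\|\bH f_n\|^2)$ is unbounded. Your witness construction could presumably be made to work, but the Sobolev-inequality reformulation is the missing idea that turns the problem into a straightforward test-function estimate.
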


\begin{proof}
(i) It suffices to employ the decomposition
\be \label{eq:domHGviaF}
	\dom(\bH_{G,\min}^\ast) = \dom(\bH_D) \dotplus \ker(\bH_{G,\min}^\ast - z) = \dom(\bH_D) \dotplus \cN_z(\bH_{G,\min}),
\ee
which holds for every $z$ in the resolvent set of $\bH_D$ (see, e.g., \cite[Prop.~14.11]{schm}).
By Theorem~\ref{th:n-pm=C0}, $\dim (\ker(\bH_{G,\min}^\ast - z)) = \#\gC_0(\cG)<\infty$. Combining~\eqref{eq:defsubHG} with $\dom(\bH_G)\subseteq \dom(\bH_{G,\min}^\ast)$, we conclude that $\dom(\bH_G) = \dom(\bH_{G,\min}^\ast)$. \\

(ii) 
Since the mapping $\nabla \colon H^1(\cG) \to L^2(\cG)$ is bounded, \eqref{eq:HG=ast} holds if and only if there exists a positive constant $C > 0$ such that
\begin{align} \label{eq:sobolev}
	\| \nabla f\|_{L^2(\cG)}^2 \le C (\|  f\|_{L^2(\cG)}^2 + \| \bH f\|_{L^2(\cG)}^2),  
\end{align}  
for all $f \in \dom(\bH_G) = \dom(\bH)\cap H^1(\cG)$. 

Suppose $\cG$ has a non-free end of finite volume $\gamma$ and fix a sequence $\cU = (U_n)$ of open sets representing $\gamma$. We can choose a sequence $(\cG_n)$ of connected subgraphs of $\cG$ such that $\# \partial  \cG_n < \infty$, almost all of the open sets $U_k$ are contained in $\cG_n$ and $\cG_n \subset U_n$ for all $n$. The latter two properties imply that $\#\gC_0(\cG_n) = \infty$ for every $n\ge 0$ and $\bigcap_{n\ge 0} \cG_n = \emptyset$. In particular, $\vol(\cG_n) \to 0$ as $n\to \infty$. 
 
Fix $\lambda<0$ and denote by $\bH_n$ the Gaffney Laplacian on the subgraph $\cG_n$, $n\ge 0$. Taking into account \eqref{eq:defsubHG}, there exists a real-valued function $f_n \in \ker(\bH_n - \lambda)$ with $f_n(v) = 0$ for all $v \in \partial \cG_n$ and such that $\| f_n \|_\infty = 1$ Moreover, extending $f_n$ by zero on $\cG \setminus \cG_n$ gives a function (also denoted by $f_n$) belonging to the domain of the Gaffney Laplacian $\bH_G$ on $\cG$. 

Assuming that $\bH_G$ is closed,  \eqref{eq:sobolev} would imply that 
\[ 
\| \nabla f_n \|_{L^2(\cG)} \lesssim \| f_n\|_{L^2(\cG)}^2 + \| \bH f_n\|_{L^2(\cG)}^2 = (1 +\lambda^2) \| f_n\|_{L^2(\cG_n)}^2 
\le (1 +\lambda^2)   \vol(\cG_n)
\]
for all $n\ge 0$. Next, for each $n$ there is  $x_n \in \cG_n$ with $|f_n(x_n)| \ge 1/2$. Choosing $y_n \in \partial \cG_n$ and a path $\cP_n$ in $\cG_n$ connecting $x_n$ and $y_n$, we get
\[
	\frac{1}{2} =  |f_n(x_n) - f_n(y_n)|\le \int_{\cP_n} |\nabla f_n (x)| dx \le \vol (\cP_n) \| \nabla f_n \|_{L^2(\cG)} \lesssim \vol (\cG_n)^2
\]
for all $n\ge 0$. However, the right-hand side tends to zero when $n\to \infty$. This contradiction completes the proof.
\end{proof}

Let us now present two particular cases of interest when Theorem \ref{th:main} provides a necessary and sufficient condition for $\bH_G$ to be closed.
 
 \begin{corollary}\label{cor:main} 
 Suppose $\cG$ has finite total volume. The following are equivalent:
 \begin{itemize}
 \item[(i)] The Gaffney Laplacian $\bH_G$ is closed,
 \item[(ii)]  \eqref{eq:HG=ast} holds true,
 \item[(iii)] $\cG$ has finitely many ends, $\#\gC(\cG)<\infty$.
 \end{itemize}
 \end{corollary}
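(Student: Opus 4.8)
The plan is to derive Corollary~\ref{cor:main} from Theorem~\ref{th:main} by exploiting the special structure of finite-volume metric graphs, where the notion of finite-volume end collapses onto the notion of end. The key observation is that when $\vol(\cG) < \infty$, \emph{every} topological end automatically has finite volume, so $\gC_0(\cG) = \gC(\cG)$. Indeed, given any end $\gamma$ and any representing sequence $\cU = (U_n)$, each $U_n \subseteq \cG$ has $\vol(U_n) \le \vol(\cG) < \infty$, which is precisely the defining condition in Definition~\ref{def:finvol}. Hence $\#\gC_0(\cG) = \#\gC(\cG)$ under the finite-volume hypothesis, and the three conditions of the corollary become tightly linked.

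With that reduction in hand, I would argue the cycle as follows. The implication $(ii) \Rightarrow (i)$ is immediate from the fact that $\bH_{G,\min}^\ast = \overline{\bH_G}$ always (stated in the excerpt just before Theorem~\ref{th:main}): if \eqref{eq:HG=ast} holds, then $\bH_G = \bH_{G,\min}^\ast$ is an adjoint, hence closed. For $(iii) \Rightarrow (ii)$: if $\#\gC(\cG) < \infty$, then by the observation above $\#\gC_0(\cG) = \#\gC(\cG) < \infty$, so Theorem~\ref{th:main}(i) applies directly and yields that $\bH_G$ is closed and \eqref{eq:HG=ast} holds. This already gives $(iii) \Rightarrow (ii) \Rightarrow (i)$.

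The remaining implication, $(i) \Rightarrow (iii)$, is the one that requires Theorem~\ref{th:main}(ii), and I expect it to be the main (though still modest) obstacle. I would argue by contraposition: suppose $\#\gC(\cG) = \infty$, hence $\#\gC_0(\cG) = \infty$ since $\vol(\cG) < \infty$. The goal is to produce a non-free finite-volume end, so that Theorem~\ref{th:main}(ii) forces $\bH_G$ to be non-closed. The point is that if \emph{every} finite-volume end were free, then by Remark~\ref{rem:freetop} each such end $\gamma$ would be separated from all others by a connected subgraph with compact (i.e. finite) boundary; a standard compactness/exhaustion argument on the end compactification shows that infinitely many pairwise-disjoint such separating subgraphs would force an exhaustion of $\cG$ by sets whose complements have unbounded... — more carefully, one uses that the end space of a locally finite graph is compact, so an infinite set of ends cannot all be isolated (free) points unless it has an accumulation point, which is itself a non-free end. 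Thus $\#\gC_0(\cG) = \infty$ together with the finite-volume hypothesis guarantees the existence of a non-free finite-volume end, and Theorem~\ref{th:main}(ii) gives $n_\pm(\bH_{G,\min}) = \infty$ and $\bH_G$ not closed, completing the contrapositive.

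The delicate step is precisely the claim that infinitely many ends in a locally finite graph cannot all be free; this is a known fact about the Freudenthal compactification (the space of ends of a locally finite connected graph is compact and totally disconnected, and free ends are exactly its isolated points, so an infinite end space always contains a non-isolated point), and I would cite \cite[\S 8.6]{die} or \cite[\S 21]{woe} rather than reprove it. Everything else is bookkeeping: unravelling Definition~\ref{def:finvol} under $\vol(\cG) < \infty$, and plugging the two halves of Theorem~\ref{th:main} into the appropriate direction of the equivalence.
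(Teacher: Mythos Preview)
Your proposal is correct and follows essentially the same route as the paper: reduce to $\gC_0(\cG)=\gC(\cG)$ via the finite-volume hypothesis, then invoke both parts of Theorem~\ref{th:main}, with the key combinatorial input being that a locally finite graph with infinitely many ends must have a non-free end. The only cosmetic difference is that the paper cites this last fact as Halin's theorem~\cite{hal}, whereas you justify it via compactness of the end space (free ends being isolated points of a compact Hausdorff space); these are equivalent ways of packaging the same result.
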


\begin{proof}
We only need to notice that $\gC(\cG) = \gC_0(\cG)$ in the case when $\vol(\cG)<\infty$. By Halin's theorem  \cite{hal}, a locally finite graph $\cG$ has at least one end which is not free if $\gC(\cG)=\infty$. Thus, it remains to apply Theorem~\ref{th:main}.
\end{proof}

Theorem \ref{th:main} also gives rise to a criterion in the case of Cayley graphs.

 \begin{corollary}\label{cor:group} 
 Suppose $\cG_d$ is a Cayley graph of a countable finitely generated group $\mG$. Then $\bH_G$ is not closed if and only if $\#\gC(\cG) = \infty$ and $\cG$ has at least one finite volume end.
 \end{corollary}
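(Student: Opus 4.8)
The plan is to read off both implications from Theorem~\ref{th:main}, the only non-elementary ingredient being the classical description of the end space of a Cayley graph.

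\emph{($\Rightarrow$)} Suppose $\bH_G$ is not closed. Taking the contrapositive of Theorem~\ref{th:main}(i) gives $\#\gC_0(\cG)=\infty$, whence $\gC_0(\cG)\ne\emptyset$ (so $\cG$ has at least one finite volume end) and $\#\gC(\cG)\ge\#\gC_0(\cG)=\infty$. I would point out that this half does not use the group structure and holds for any metric graph satisfying Hypothesis~\ref{hyp:locfin}.

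\emph{($\Leftarrow$)} Now assume $\#\gC(\cG)=\infty$ and fix a finite volume end $\gamma_0\in\gC_0(\cG)$. By Theorem~\ref{th:main}(ii) it suffices to produce a \emph{non-free} finite volume end; the plan is to show that under our hypotheses \emph{no} end of $\cG$ is free, so that $\gamma_0$ itself works. For this I would use that the Cayley graph $\cG_d$ of a finitely generated group is connected, locally finite and vertex-transitive, and invoke the Freudenthal--Hopf theorem (see, e.g., \cite[\S 21]{woe}): a connected locally finite vertex-transitive graph with infinitely many ends has end space $\gC(\cG)$ homeomorphic to a Cantor set, in particular perfect. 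Since an end is free precisely when it is an isolated point of $\gC(\cG)$ (immediate from Remark~\ref{rem:freetop}; cf.\ also \cite[\S 8.6]{die}), a perfect end space has no free ends. Hence $\gamma_0$ is a non-free finite volume end, and Theorem~\ref{th:main}(ii) yields $n_\pm(\bH_{G,\min})=\infty$ and, in particular, that $\bH_G$ is not closed.

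The statement is thus essentially a direct corollary of Theorem~\ref{th:main}; the one place that needs an external input is the Freudenthal--Hopf dichotomy for vertex-transitive graphs combined with the identification of free ends with isolated points of the end space, which together forbid a free finite volume end once $\#\gC(\cG)=\infty$. The only other thing to check is the routine fact that the Cayley graph at hand satisfies Hypothesis~\ref{hyp:locfin}: connectedness because the generating set generates, local finiteness because it is finite, and simplicity by the standard convention of choosing a symmetric generating set not containing the identity.
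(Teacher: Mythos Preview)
Your proof is correct and follows essentially the same approach as the paper: both directions are read off from Theorem~\ref{th:main}, with the key external input being that a Cayley graph with infinitely many ends has end space homeomorphic to a Cantor set and hence no free ends. The paper's proof is more terse (it handles both directions in one line, citing \cite{geog} rather than \cite{woe} for the Cantor-set fact), and one could quibble that the name ``Freudenthal--Hopf'' is usually attached to the $0/1/2/\infty$ dichotomy rather than to the perfectness of the end space, but the mathematics is identical.
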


\begin{proof}
If there are infinitely many ends, then the end space is known to be homeomorphic to the Cantor set (see, e.g., \cite[Addendum~13.5.8]{geog}), and hence there are no free graph ends. Theorem \ref{th:main} completes the proof.
\end{proof}

\begin{remark}
By the Freudenthal--Hopf theorem, a Cayley graph of a countable finitely generated group has 1, 2 or infinitely many ends. Moreover, the number of ends is independent of the choice of the finite generating set. 
By Hopf's theorem, $\cG_d$ has exactly two ends if and only if $\mG$ is virtually infinite cyclic. The classification of finitely generated groups with infinitely many ends is due to J.\ R.\ Stallings (see, e.g., \cite[Chapter 13]{geog}). In particular, if $\mG$ is amenable, then it has finitely many ends (actually, either 1 or 2) and hence the Gaffney Laplacian is always closed for Cayley graphs of amenable groups.
\end{remark}

\begin{remark}
The above considerations shed more light on the results obtained in \cite{kmn19}. 
\begin{itemize}
\item[(i)] First of all, combining \eqref{eq:inclusGKH} with \eqref{eq:domHG} and Theorem \ref{th:main}(i) we obtain one of the main results in \cite{kmn19}, Theorem 4.1, on the deficiency indices of the Kirchhoff Laplacian: 
\be\label{eq:npm>ends}
		\Nr_\pm (\bH_0) \ge \# \gC_0(\cG),
\ee
{\em with equality if and only if either $\# \gC_0(\cG) = \infty$ or $\dom(\bH) \subset H^1(\cG)$}.
\item[(ii)] It is straightforward to see that in the case when $\cG$ has finitely many finite volume ends, $\#\gC_0(\cG)<\infty$, the triplet $\Pi = \{\C^{\#\gC_0(\cG)}, \Gamma_0,\Gamma_1\}$, where the mappings $\Gamma_0,\Gamma_1\colon \dom(\bH)\cap H^1(\cG)\to \C^{\#\gC_0(\cG)}$ are defined by
\begin{align}\label{def:Gamma}
\Gamma_0\colon & f\mapsto  \big(f(\gamma)\big)_{\gamma \in \gC_0 (\cG)}, & 
\Gamma_1\colon & f\mapsto \big(\partial_n f (\gamma)\big)_{\gamma \in \gC_0 (\cG)},
\end{align}
(see Prop.~6.6 and Lemma 6.9 in \cite[\S 6]{kmn19}) is a {\em boundary triplet}\footnote{For definitions and basic properties we refer to, e.g., \cite[Chapter 14]{schm} or \cite[Appendix A]{ekmn}.} for the Gaffney Laplacian $\bH_G$. This also implies the description of Markovian and finite energy extensions of $\bH_0$ obtained in \cite[Theorem 6.11]{kmn19}.
\end{itemize}
\end{remark}

\section{Examples} \label{sec:examples}
The case not covered by Theorem \ref{th:main} is when $\cG$ has infinitely many finite volume free ends, however, all non-free ends have infinite volume. Moreover, there is one additional problem: it is not clear what is the closure of $\bH_G$ if it is not closed. 
We begin with the following result.

\begin{proposition} \label{prop:graphseq}
Let  $\cG$ contain a sequence of connected subgraphs $(\cG_n)$ such that 
\begin{itemize}
\item[(a)] $\lim_{n\to \infty} \vol(\cG_n) = 0$, and 
\item[(b)] $\# \partial \cG_n < \# \gC(\cG_n)$ for all $n\ge 0$.
\end{itemize}
 Then $\bH_G$ is not closed.
\end{proposition}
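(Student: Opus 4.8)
The strategy is to argue by contradiction, running the very same mechanism as in the proof of Theorem~\ref{th:main}(ii): hypotheses (a)--(b) are precisely the properties of the auxiliary subgraphs that were extracted there from the presence of a non-free finite volume end, so almost all of that argument carries over verbatim and only the step producing the test functions needs a fresh look.

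So suppose $\bH_G$ is closed. Since $\overline{\bH_G}=\bH_G^{\ast\ast}=\bH_{G,\min}^\ast$ by Lemma~\ref{lem:minGaffClosed}, this is the same as \eqref{eq:HG=ast}, and, by the equivalence established at the beginning of the proof of Theorem~\ref{th:main}(ii) (boundedness of $\nabla\colon H^1(\cG)\to L^2(\cG)$ together with the closed graph theorem), it is equivalent to the existence of $C>0$ with
\begin{equation*}
\|\nabla f\|_{L^2(\cG)}^2\le C\big(\|f\|_{L^2(\cG)}^2+\|\bH f\|_{L^2(\cG)}^2\big),\qquad f\in\dom(\bH_G)=\dom(\bH)\cap H^1(\cG),
\end{equation*}
which is \eqref{eq:sobolev}. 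I will contradict this by producing functions $f_n\in\dom(\bH_G)$ for which the quotient of the two sides is unbounded.

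Fix $\lambda<0$. By (a) we may, after dropping finitely many terms, assume $\vol(\cG_n)<\infty$ for all $n$; then every end of $\cG_n$ has finite volume, so $\gC_0(\cG_n)=\gC(\cG_n)$, and (b) forces $\partial\cG_n$ to be finite with $\#\gC_0(\cG_n)>\#\partial\cG_n$. In particular $\cG_n$ has at least one end, hence is infinite, and $\partial\cG_n\neq\varnothing$ for large $n$ (otherwise $\cG_n$ would be a union of connected components of the connected graph $\cG$, i.e.\ $\cG_n=\cG$, contradicting (a)); thus each $\cG_n$ satisfies Hypothesis~\ref{hyp:locfin}. Let $\bH_n$ be the Gaffney Laplacian on $\cG_n$. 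By the reasoning behind \eqref{eq:defsubHG} (that is, \eqref{eq:domHG} together with \cite[Lemma~4.7]{kmn19}) applied to $\cG_n$, the space $\ker(\bH_n-\lambda)$ has dimension $\#\gC_0(\cG_n)$. Intersecting it with the at most $\#\partial\cG_n$ linear conditions $f(v)=0$, $v\in\partial\cG_n$, and invoking (b), we obtain a nonzero $f_n\in\ker(\bH_n-\lambda)$ with $f_n(v)=0$ for all $v\in\partial\cG_n$; passing to its real or imaginary part we take $f_n$ real-valued, and since $H^1(\cG_n)\hookrightarrow C_b(\cG_n)$ (\cite[Lemma~3.2]{kmn19}) we normalize $\|f_n\|_\infty=1$. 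Extending $f_n$ by zero off $\cG_n$ preserves continuity (as $f_n|_{\partial\cG_n}=0$) and keeps $\nabla f_n\in L^2(\cG)$; the Kirchhoff conditions hold everywhere on $\cG$ — trivially at vertices outside $\cG_n$, by $f_n\in\dom(\bH_n)$ at the interior vertices of $\cG_n$, and at $v\in\partial\cG_n$ because the sum of the normal derivatives there reduces to the sum over the edges of $\cG_n$ incident to $v$, which vanishes by the Kirchhoff condition satisfied by $f_n$ in $\cG_n$ — while $\bH f_n=\lambda f_n\in L^2(\cG)$. Hence $f_n\in\dom(\bH)\cap H^1(\cG)=\dom(\bH_G)$.

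It remains to derive the contradiction exactly as in Theorem~\ref{th:main}(ii). Inserting $f_n$ into \eqref{eq:sobolev} and using $\|f_n\|_\infty=1$ gives $\|\nabla f_n\|_{L^2(\cG)}^2\le C(1+\lambda^2)\|f_n\|_{L^2(\cG_n)}^2\le C(1+\lambda^2)\vol(\cG_n)$. Choose $x_n\in\cG_n$ with $|f_n(x_n)|\ge\tfrac12$, a vertex $y_n\in\partial\cG_n$, and a simple path $\cP_n\subseteq\cG_n$ joining them, so $\vol(\cP_n)\le\vol(\cG_n)$; then the fundamental theorem of calculus and Cauchy--Schwarz yield
\begin{equation*}
\tfrac12\le|f_n(x_n)-f_n(y_n)|\le\int_{\cP_n}|\nabla f_n|\,dx\le\vol(\cP_n)^{1/2}\,\|\nabla f_n\|_{L^2(\cG)}\le\sqrt{C(1+\lambda^2)}\;\vol(\cG_n),
\end{equation*}
and the right-hand side tends to $0$ by (a) — a contradiction, so $\bH_G$ is not closed. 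The only point where this argument departs from the proof of Theorem~\ref{th:main}(ii) is the dimension count above, where "$\#\gC_0(\cG_n)=\infty$" is replaced by "$\#\partial\cG_n<\#\gC(\cG_n)$" and must be paired with $\vol(\cG_n)<\infty$ to guarantee that $\ker(\bH_n-\lambda)$ still contains a nonzero element vanishing at every boundary vertex; this is the only subtlety, the remaining verifications (in particular that the zero-extension respects the Kirchhoff vertex conditions) being routine.
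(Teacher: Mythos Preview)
Your proof is correct and follows precisely the approach the paper intends: the paper's own proof simply states that properties (a) and (b) are exactly what was used in Theorem~\ref{th:main}(ii) and leaves the details to the reader, and you have supplied those details faithfully. Your dimension count $\dim\ker(\bH_n-\lambda)=\#\gC_0(\cG_n)>\#\partial\cG_n$ is the correct replacement for the case $\#\gC_0(\cG_n)=\infty$ in Theorem~\ref{th:main}(ii), and your Cauchy--Schwarz estimate (with the exponent $1/2$ on $\vol(\cP_n)$) is in fact slightly cleaner than the paper's display there.
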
 

\begin{proof}
It is easy to see that properties (a) and (b) are exactly the ones used in the proof of Theorem \ref{th:main}(ii) (in the case of a non-free finite volume end $\# \partial \cG_n < \infty$ and $\# \gC(\cG_n) = \# \gC_0(\cG_n) = \infty$ for all $n\ge 0$) and hence the proof of Proposition \ref{prop:graphseq} is literally the same and we leave it to the reader.
\end{proof}

\begin{remark}\label{rem:graphseq}
In fact, one can replace (a) in Proposition \ref{prop:graphseq} by 
\begin{itemize}
\item[(a')]  $\sup_n \vol(\cG_n) < \infty$ and $\lim_{n\to \infty} \diam(\cG_n) = 0$,
\end{itemize}
where $\diam(\cG_n) = \sup_{x,y\in \cG_n}\varrho(x,y)$ is the diameter of $\cG_n$.
\end{remark}

Proposition \ref{prop:graphseq} enables us to construct graphs without finite volume non-free ends, however with the corresponding Gaffney Laplacian $\bH_G$ not being closed. 

\begin{example} 
Take a path graph $\cP_0 = (\Z_{\ge 0}, |\cdot|)$ equipped with some positive lengths and attach to each vertex $v_n \in \Z_{\ge 0}$ an infinite rooted graph $\cG_n$. If each $\cG_n$ has at least two ends (e.g., each $\cG_n$ consists of two rooted antitrees joined at the root vertices) and $\liminf_n \vol (\cG_n) = 0$, then $\bH_G$ is not closed (see Figure \ref{fig:line}).
\end{example}

\begin{figure}[h!] 
\begin{center}
	\begin{tikzpicture}
	
		\draw (5, 0) -- (6, 0) [dashed];	
		\draw (4, 0) -- (5, 0);
		\foreach \x in {0, 2}{ 
		
		\draw  (\x , 0) -- (\x +2 , 0);}

		\foreach \x in {0, 2, 4}{ \filldraw (\x, 0)  circle (0.7 pt) node [below left] {\footnotesize  \;}  ;
		
		}
		
		\foreach \x in {0, 2}{ 
		
		\draw  (\x , 0) -- (\x , {0.3/(0.25 *\x + 1)});
		\fill [pattern=north east lines, pattern color=gray]  (\x , {0.3/(0.25 *\x + 1)}) -- ({\x + 0.5 /(0.25 *\x + 1)}, {0.6/(0.25 *\x + 1)}) -- ({\x + 0.5 /(0.25 *\x + 1)}, {0.9/(0.25 *\x + 1)}) -- ({\x - 0.5 /(0.25 *\x + 1)}, {0.9/(0.25 *\x + 1)}) -- ({\x - 0.5 /(0.25 *\x + 1)}, {0.6/(0.25 *\x + 1)})	 --  (\x , {0.3/(0.25 *\x + 1)});
		
		\filldraw (\x , {0.3/(0.25 *\x + 1)}) circle (0.7 pt) node [below left] {\footnotesize  \;}  ;

		\draw  (\x , 0) -- (\x , {-0.3/(0.25 *\x + 1)});
		\fill [pattern=north east lines, pattern color=gray]  (\x , {-0.3/(0.25 *\x + 1)}) -- ({\x + 0.5 /(0.25 *\x + 1)}, {-0.6/(0.25 *\x + 1)}) -- ({\x + 0.5 /(0.25 *\x + 1)}, {-0.9/(0.25 *\x + 1)}) -- ({\x - 0.5 /(0.25 *\x + 1)}, {-0.9/(0.25 *\x + 1)}) -- ({\x - 0.5 /(0.25 *\x + 1)}, {-0.6/(0.25 *\x + 1)})	 --  (\x , {-0.3/(0.25 *\x + 1)});
		
		\filldraw (\x , {-0.3/(0.25 *\x + 1)}) circle (0.7 pt) node [below left] {\footnotesize  \;}  ;
}
		
		\foreach \x in {4}{ 
		\draw[dotted, thin, gray] (\x , 0) -- (\x , {0.3/(0.25 *\x + 1)});
		\fill [pattern=north east lines, pattern color=gray]  (\x , {0.3/(0.25 *\x + 1)}) -- ({\x + 0.5 /(0.25 *\x + 1)}, {0.6/(0.25 *\x + 1)}) -- ({\x + 0.5 /(0.25 *\x + 1)}, {0.9/(0.25 *\x + 1)}) -- ({\x - 0.5 /(0.25 *\x + 1)}, {0.9/(0.25 *\x + 1)}) -- ({\x - 0.5 /(0.25 *\x + 1)}, {0.6/(0.25 *\x + 1)})	 --  (\x , {0.3/(0.25 *\x + 1)});
		
		\draw[dotted, thin, gray] (\x , 0) -- (\x , {-0.3/(0.25 *\x + 1)});
		\fill [pattern=north east lines, pattern color=gray]  (\x , {-0.3/(0.25 *\x + 1)}) -- ({\x + 0.5 /(0.25 *\x + 1)}, {-0.6/(0.25 *\x + 1)}) -- ({\x + 0.5 /(0.25 *\x + 1)}, {-0.9/(0.25 *\x + 1)}) -- ({\x - 0.5 /(0.25 *\x + 1)}, {-0.9/(0.25 *\x + 1)}) --({\x - 0.5 /(0.25 *\x + 1)}, {-0.6/(0.25 *\x + 1)})	 --  (\x , {-0.3/(0.25 *\x + 1)});
		}
		
		\foreach \x in {0, 1, 2}{ 
		
		\node at ({2*\x}, 0.9) [above] {\footnotesize $\cG_{\x}$} ;
		}
			\end{tikzpicture}
\end{center}
	\caption{$\Z_{\ge 0}$ with attached graphs $\cG_n$.}
	\label{fig:line}
\end{figure}

\begin{remark}
If one of the conditions (a), (a'), or (b) fails to hold, then the corresponding Gaffney Laplacian may or may not be closed. Indeed, consider the graph depicted on Figure \ref{fig:line}. Assuming that each $\cG_n$ has finitely many graph ends and finite total volume, the Gaffney Laplacian $\bH_n$ on $\cG_n$ is closed, which is further equivalent to the validity of the Sobolev-type inequality \eqref{eq:sobolev} with some constant $C_n>0$, $n\ge 0$. Also one has similar inequalities on every edge of a path graph, however, the corresponding constants do depend on the edges lengths (see, e.g., \cite[Chapter IV.2]{kato}). When we ``glue" the graphs $(\cG_n)$ and edges of a path graph, the space $H^1(\cG)$ is a subspace of the direct sum of $H^1$ spaces and hence if all the constants admit a uniform upper bound, then \eqref{eq:sobolev} would trivially hold true on $\cG$ (e.g., take all $\cG_n$ being identical and assume that the path graph is equilateral). 

Obtaining a complete answer in the case of a metric graph depicted on Figure \ref{fig:line} seems to be an interesting and nontrivial problem.
\end{remark}

In conclusion we would like to show that for a large class of metric graphs the closure of the Gaffney Laplacian may coincide with the maximal Kirchhoff Laplacian (which is also equivalent to the fact that $\bH_0 = \bH_{G,\min}$).

\begin{example}[Radially symmetric trees]
Let $\cG= \cT$ be a \emph{radially symmetric metric tree}: that is, a tree $\cT$ with a root $o$ such that for each $n \ge 0$, all vertices in the combinatorial sphere $S_n$ have the same number of descendants $b_n \in \Z_{\ge 2}$ and all edges between $S_n$ and $S_{n+1}$ have the same length $\ell_n\in (0,\infty)$. Clearly, a radially symmetric tree $\cT$ is uniquely determined by the sequences $(b_n)$ and $(\ell_n)$. The assumptions imply that $\cT$ has uncountably many ends. Define
\begin{align*}
\mu_n & = \prod_{k=0}^n b_k, & t_n  & = \sum_{k=0}^{n-1} \ell_k,
\end{align*}
for all $n\ge 0$. Notice that $(\cT,\varrho)$ is complete exactly when $\cL:= \lim_{n\to \infty} t_n = \infty$. 

\begin{lemma}\label{lem:HGtree}
Let $\cG=\cT$ be a radially symmetric tree. 
\begin{itemize}
\item[(i)] The corresponding Gaffney Laplacian $\bH_G$ is self-adjoint if
\begin{align*}
\vol(\cT) = \sum_{n\ge 0} \mu_n\ell_n = \infty.
\end{align*}
\item[(ii)]
If $\vol(\cT)<\infty$, then $\bH_G$ is not closed and its closure coincides with the maximal Kirchhoff Laplacian $\bH$, $\bH_G\neq \overline{\bH}_G = \bH$.
\end{itemize}
\end{lemma}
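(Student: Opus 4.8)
The key point is that radial symmetry reduces everything to a weighted half-line problem, and on that half-line the relevant operators have a one-dimensional deficiency space that can be analyzed by ODE/Weyl-theory techniques. First I would set up the decomposition $L^2(\cT) = L^2_{\mathrm{sym}} \oplus L^2_{\mathrm{sym}}^{\perp}$, where $L^2_{\mathrm{sym}}$ consists of radially symmetric functions (constant on each combinatorial sphere of the metric structure). Under the natural unitary $U\colon L^2_{\mathrm{sym}} \to L^2((0,\cL); \rho(t)\,\rD t)$ with weight $\rho(t) = \mu_n$ for $t \in (t_n, t_{n+1})$, the maximal Kirchhoff Laplacian restricted to the symmetric sector becomes the operator $-\rho^{-1}(\rho f')'$ with Kirchhoff matching conditions at the $t_n$, and the orthogonal complement carries only the "already self-adjoint" part (this is standard for radially symmetric trees; cf.\ the splitting used in Kirchhoff-Laplacian literature). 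Since $\Nr_\pm(\bH_0)$, self-adjointness of $\bH_G$, and the identity $\overline{\bH_G} = \bH$ are all detected in the symmetric sector, it suffices to argue there.

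For part (i), I would observe that $\vol(\cT) = \sum_n \mu_n \ell_n = \int_0^\cL \rho(t)\,\rD t = \infty$ means precisely that constants do not lie in $L^2((0,\cL);\rho\,\rD t)$. By Lemma \ref{cor:Markov}, $\bH_G$ is self-adjoint iff $H^1_0(\cT) = H^1(\cT)$, i.e.\ iff $\gC_0(\cT) = \emptyset$. So I need: no end of $\cT$ has finite volume. For a radially symmetric tree, the volume "seen" from the sphere $S_N$ outward along any ray is comparable to $\sum_{n \ge N} \ell_n \ge \sum_{n\ge N}\mu_n\ell_n / \mu_N \cdot (\text{something})$ — more carefully, the volume of the subtree hanging below a vertex in $S_N$ is $\sum_{n \ge N} (\mu_n/\mu_N)\ell_n$, and any neighborhood $U$ of an end that is contained in such a subtree already has infinite volume when $\sum_{n\ge N}(\mu_n/\mu_N)\ell_n = \infty$, which follows from $\sum_n \mu_n \ell_n = \infty$ since $\mu_N$ is a fixed finite constant. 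Hence every $U_n$ representing any end has infinite volume, so $\gC_0(\cT) = \emptyset$ and $\bH_G$ is self-adjoint. (Alternatively one can quote \cite[\S 4]{kmn19}, where finite-volume ends of radially symmetric trees are characterized by $\sum\mu_n\ell_n<\infty$.)

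For part (ii), assume $\vol(\cT) = \sum_n \mu_n\ell_n < \infty$. Then the constant function $\id$ lies in $L^2((0,\cL);\rho\,\rD t)$, it is harmonic, and it witnesses $\gC_0(\cT) \neq \emptyset$; moreover since $\cT$ has uncountably many ends and the uniform radial structure forces them all to be non-free (the end space is a Cantor set, exactly as in Corollary \ref{cor:group}), Theorem \ref{th:main}(ii) gives $\Nr_\pm(\bH_{G,\min}) = \infty$ and $\bH_G$ is not closed. It remains to identify $\overline{\bH_G}$ with $\bH$, equivalently (by $\overline{\bH_G} = \bH_{G,\min}^{\ast\ast} \supseteq \bH_G$ and duality) to show $\bH_{G,\min} = \bH_0$, i.e.\ that $\bH_{G,\min}^\ast = \bH$. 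By Lemma \ref{lem:minGaffClosed} this means $\bH_G^{\ast\ast} = \bH$, so I must show $\overline{\bH_G} \supseteq \bH$, i.e.\ every $f \in \dom(\bH) = H^2(\cT\setminus\cV)$ satisfying Kirchhoff conditions is an $L^2$-graph-limit of functions in $H^1(\cT)\cap\dom(\bH)$. The strategy: given such $f$, approximate by cutting off $f$ outside the ball of radius $t_N$ using a radial logarithmic-type cutoff $\chi_N$ adapted to the weight — because $\vol$ is finite, the "capacity" of the ideal boundary is zero, so one can build $\chi_N \to 1$ with $\chi_N \equiv 1$ on $B(o,t_N)$, $\chi_N$ compactly supported-ish, and crucially $\|\nabla(\chi_N f)\|_{L^2} + \|\Delta(\chi_N f) - \chi_N\Delta f\|_{L^2} \to 0$. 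The point is that $\chi_N f \in H^1_c$-type functions are in $\dom(\bH_G)$ and $\bH_G(\chi_N f) \to \bH f$ in $L^2$ while $\chi_N f \to f$ in $L^2$; the error terms involve $\nabla\chi_N$ and $\nabla^2\chi_N$ paired against $f, \nabla f \in L^2$, and finiteness of the total volume is exactly what makes the minimal-capacity cutoff drive these to zero.

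**Main obstacle.** The hardest step is part (ii), the identification $\overline{\bH_G} = \bH$: one must produce cutoff functions $\chi_N$ that simultaneously control the first and second derivative error terms in the graph norm. The natural choice is the weighted-capacity minimizer (the analogue of $\log$-cutoffs for parabolicity), and one needs the quantitative estimate that the minimal "energy" of such cutoffs, together with the corresponding second-order terms, tends to zero — this is where $\sum_n\mu_n\ell_n < \infty$ must be used sharply. A cleaner route, if available, is to invoke the boundary-triplet / regularization machinery of \cite[\S 6]{kmn19} adapted to the infinitely many non-free ends, or to diagonalize the symmetric-sector Jacobi-type operator explicitly; I would first try the explicit radial cutoff and fall back on the abstract extension theory only if the error bookkeeping becomes unwieldy.
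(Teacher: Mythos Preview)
Your treatment of part (i) and of the non-closedness assertion in part (ii) is fine; the argument via Lemma~\ref{cor:Markov} and the computation of subtree volumes is a legitimate variant of the paper's one-line citation that $\bH_0$ is essentially self-adjoint iff $\vol(\cT)=\infty$, and your use of Theorem~\ref{th:main}(ii) for non-closedness matches the paper's use of Corollary~\ref{cor:main}.

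The gap is in the identification $\overline{\bH_G}=\bH$. Your cutoff scheme requires that the commutator term $2\chi_N'\,f'$ tend to $0$ in $L^2(\cT)$, and you justify this by writing ``the error terms involve $\nabla\chi_N$ and $\nabla^2\chi_N$ paired against $f,\nabla f\in L^2$''. But $\nabla f\in L^2(\cT)$ is \emph{exactly} the condition $f\in H^1(\cT)$, i.e.\ $f\in\dom(\bH_G)$, and for a generic $f\in\dom(\bH)$ it fails --- indeed, if it held for all $f\in\dom(\bH)$ we would have $\dom(\bH)\subset H^1(\cT)$, hence $\bH_G=\bH$ closed, contradicting what you just proved. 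On the support of $\chi_N'$ (an annulus running out towards the boundary of the tree) the integral $\int|\chi_N'|^2|f'|^2$ cannot be made small: since $\cL<\infty$ one has $\|\chi_N'\|_\infty\ge c>0$ for any cutoff going from $1$ to $0$, while $\int_{\mathrm{supp}\,\chi_N'}|f'|^2$ need not decay. No capacity/parabolicity argument rescues this, because parabolicity-type cutoffs control $\int|\nabla\chi_N|^2$, not $\int|\nabla\chi_N|^2|f'|^2$ for an uncontrolled $f'$.

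The paper sidesteps this entirely. Using that $\bH_D$ has strictly positive spectrum when $\vol(\cT)<\infty$, one has the algebraic splitting $\dom(\bH)=\dom(\bH_D)\dotplus\ker(\bH)$. Since $\dom(\bH_D)\subset H^1_0(\cT)\subset\dom(\bH_G)$ automatically, it suffices to show $\ker(\bH)\subset\dom(\overline{\bH_G})$, and for this one uses the Naimark--Solomyak/Carlson decomposition \eqref{eq:decompKHL} to write down an explicit orthogonal generating set for $\ker(\bH)$ (the constant together with the functions $g_n(x)=\int_{t_n}^x\mu^{-1}\,ds$), check by a one-line computation that each generator lies in $H^1(\cT)$, and conclude that finite combinations --- which are $L^2$-dense in $\ker(\bH)$ --- lie in $\ker(\bH_G)$. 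The key move is that you only need $H^1$ membership for these \emph{specific} harmonic functions, not for arbitrary $f\in\dom(\bH)$.
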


\begin{proof}
(i) It is well-known (see, e.g., \cite{car00, sol04}) that in the  radially symmetric case $\bH$ is self-adjoint if and only if $\vol(\cT) = + \infty$. In particular, in this case all four operators in \eqref{eq:inclusGKH} coincide and hence $\bH_G = \bH_G^\ast$ is closed. 

(ii) If  $\mathcal{T}$ has finite volume, then by Corollary \ref{cor:main} the Gaffney Laplacian $\bH_G$ is not closed. 
Since $\vol(\cT) < \infty$, the Friedrichs extensions $\bH_D$ has strictly positive spectrum (e.g. \cite[Corollary 3.5]{kn17}) and hence 
\[ 
	\dom(\bH) = \dom(\bH_D) \dotplus \ker(\bH).
\]
However, $\dom(\bH_D) \subseteq \dom(\bH_G)$ and it suffices to show that $\ker(\bH)\subseteq \dom( \overline{\bH}_G)$.
According to \cite{car00, ns01} (see also \cite[Section 7]{sol04} and \cite{bl19}), the Kirchhoff Laplacian on a radially symmetric tree $\bH$ is unitarily equivalent to 
\be \label{eq:decompKHL}
	\wt\bH = \rH_{\sym} \bigoplus_{n\ge 0} {\rH}_{n}\otimes \mathbb{I}_{\mu_n-\mu_{n-1}}, 
\ee
where the operators $\rH_{\sym}$ and $\rH_n$ in \eqref{eq:decompKHL} are Sturm--Liouville operators defined by the differential expression
\be\label{eq:tauA}
	\tau = -\frac{1}{\mu(t)}\frac{\rD}{\rD t}\mu(t)\frac{\rD}{\rD t},
\ee
however, in different $L^2$ spaces: $\rH_{\sym}$ acts in $L^2([0,\cL);\mu)$ on the domain
\[
\dom(\rH_{\sym})= \big\{f\in L^2([0, \cL); \mu)|\, f,\, \mu f' \in AC([0, \cL]),\ \tau f\in L^2([0, \cL); \mu);\ f'(0)=0\big\},
\]
and $\rH_n$, $n\ge 0$ are defined in  $L^2([t_n,\cL);\mu)$ on the domain
\[ 
\dom(\rH_n)= \big\{f\in L^2([t_n, \cL); \mu)|\, f,\, \mu f' \in AC([t_n, \cL]),\ \tau f\in L^2([t_n, \cL); \mu);\ f(t_n)=0\big\}.
\]
The weight function $\mu\colon [0, \mathcal L) \to [0, \infty)$ is explicitly given by
\begin{align}\label{eq:muL}
		 \mu(s)  = \sum_{n \geq 0} \mu_n \id_{[t_n, t_{n+1})}(s),\quad s\in [0,\cL).
\end{align}
By \eqref{eq:decompKHL}, $\ker(\bH)$ can be decomposed via the kernels of $\rH_{\sym}$ and $\rH_n$. Notice that $\ker(\rH_{\sym} )= \Span \{ \id_{[0, \cL)}\}$ and $\ker(\rH_n)= \Span \{ g_n\}$, where $g_n$ is given by
\[
	g_n(x) = \int_{t_n}^x \frac{1}{\mu(s)} \rD s, \qquad x \in[ t_n,\cL).
\]
With respect to the decomposition \eqref{eq:decompKHL}, every $g_n$ as well as $\id_{[0,\cL)}$ defines a function in $\ker(\bH)$. In particular, $\id_{[0,\cL)}$ gives rise to $\id_{\cT}$, which is clearly in $H^1(\cT)$. Since 
\[
\int_{t_n}^\infty |g_n'(x)|^2 \mu(x) \rD x = \int_{t_n}^\infty \frac{\rD x}{ \mu(x) } =     
 \sum_{k=n}^\infty  \frac{\ell_k}{ \mu_n} < \infty,
\]
according to \cite[Theorem 3.1]{ns01} (see also \cite[equation (3.12)]{sol04}), the other functions are also in $H^1(\cT)$.
Thus $\ker(\bH_G)$ is dense in $\ker(\bH)$, which completes the proof.
\end{proof}
\end{example}

\begin{remark}\label{rem:fampr}
Radially symmetric trees are a particular example of the so-called \emph{family preserving metric graphs} (see \cite{bl19} and also \cite{brke13}). 
 Employing the results from \cite{bl19} (and also assuming no horizontal edges), it is in fact possible to show that for family preserving metric graphs $\bH_0 = \bH_{G,\min}$ and hence either $\bH_G$ is closed (which holds exactly when the corresponding metric graph either has infinite volume, and hence $\bH_0$ is self-adjoint \cite[Remark 4.12]{kmn19}, or it has finite volume and finitely many ends) or its closure coincides with the maximal Kirchhoff Laplacian.
\end{remark}


\noindent
\ack We thank Omid Amini and Matthias Keller for interesting discussions and Ivan Veseli\'c for bringing \cite{ebe} to our attention. 
We also thank the referee for the careful reading and remarks that have helped to improve the exposition.

N.N. appreciates the hospitality at the Centre de math\'ematiques Laurent Schwartz (\'Ecole Polytechnique) 
during a research stay funded by the OeAD (Marietta Blau-grant, ICM-2019-13386), where a part of this work was done.

\end{document}